\theoremstyle{plain}
\newtheorem{theorem}{Theorem}[section]
\newtheorem{proposition}[theorem]{Proposition}
\newtheorem{lemma}[theorem]{Lemma}
\newtheorem{corollary}[theorem]{Corollary}
\theoremstyle{definition}
\newtheorem{definition}[theorem]{Definition}
\newtheorem{remark}[theorem]{Remark}
\numberwithin{equation}{section}
\newcommand{\Down}{\mathcal{D}}
\newcommand{\A}{\mathbf{A}}
\renewcommand{\L}{\mathbf{L}}						
\newcommand{\M}{\mathbf{M}}
\newcommand{\Fi}{\mathrm{Fi}}					
\newcommand{\Fig}{\mathrm{Fig}}					
\newcommand{\B}{\mathrm{Bo}}					
\newcommand{\At}{\mathrm{At}_{\mathrm{d}}}		
\newcommand{\D}{\mathrm{D}}						
\newcommand{\comp}{\mathrm{Comp}}					
\newcommand{\C}{\mathcal{C}}						
\newcommand{\Irr}{\mathrm{Irr}}					
\newcommand{\N}{\mathbf{N}}						
\renewcommand{\S}{\mathbf{S}}						
\begin{document}

\title{Finite distributive nearlattices}

\address{Universidad Nacional de La Pampa. Facultad de Ciencias Exactas, y Naturales. Santa Rosa, Argentina.}
\email{lucianogonzalez@exactas.unlpam.edu.ar}
\author{Luciano J. Gonz\'{a}lez}

\address{CIC and Departamento de Matem\'{a}tica, Facultad de Ciencias Exactas, Universidad Nacional del Centro, Pinto 399, 7000 Tandil, Argentina.}
\email{calomino@exa.unicen.edu.ar}
\author{Ismael Calomino}

\subjclass[2010]{Primary 06D75; Secondary 06A12; 06D05}
\keywords{Nearlattice; Lattice; Representation theorem; Irreducible elements}

\begin{abstract}
Our main goal is to develop a representation for finite distributive nearlattices through certain ordered structures. This representation generalizes the well-known representation given by Birkhoff for finite distributive lattices through finite posets. We also study finite distributive nearlattices through the concepts of dual atoms, boolean elements, complemented elements and irreducible elements. We prove that the sets of boolean elements and complemented elements form semi-boolean algebras. We show that the set of boolean elements of a finite distributive lattice is a boolean lattice.
\end{abstract}

\maketitle

\section{Introduction and preliminaries}

It is well known that the class of distributive lattices have many and important applications,  especially in logic and computer science. Thus, it is also important to study some natural generalizations of distributive lattices which may result interesting. An important class of distributive lattices for computer science is the class of finite distributive lattices. This paper deals with the concept of \emph{finite distributive nearlattice}, which is a natural and nice generalization of finite distributive lattice.

Distributive nearlattices were study from different points of view: algebraic, topological and logical \cite{ChaKo08,Ha06,ChaKo07,ArKi11,CaCe15,CeCa16,GoCa19,CaCeGo19,CeCa19,Ca19,CaGo20,CeCa14,Go18,Go19}.

In Section \ref{sec:atoms and boolean elements} we consider the notions of dual atom and boolean element. We prove that the boolean elements form a semi-boolean algebra (\cite{Abb67}). In Section \ref{sec:annihilators} we show that the semi-boolean algebra of boolean elements of a finite distributive nearlattice $\A$ is a homomorphic image of $\A$. We generalize the concept of complement elements from distributive lattice to distributive nearlattice and, we show that the set of complement elements of a distributive nearlattice is a semi-boolean algebra. Section \ref{sec: free extension} is concerned with the free distributive lattice extension of a distributive nearlattice (\cite{CeCa16}). Finally, in Section \ref{sec:representation} we develop a discrete representation for the class of finite distributive nearlattice, which is a generalization of the well-known representation for finite distributive lattice given by Birkhoff \cite{Bi37}.

We assume that the reader is familiar with the theory of ordered sets and lattices. Our main references for Order and Lattice theory are \cite{DaPri02,Gra11,BaDwi74}. Now we introduce the notational conventions that we use throughout the paper.

Let $P=\langle P,\leq\rangle$ be a poset. A subset $U\subseteq P$ is said to be an \emph{upset} when for all $a,b\in P$, if $a\leq b$ and $a\in U$, then $b\in U$. Dually we have the notion of \emph{downset}. Given an element $a\in P$, the \emph{principal upset} generated by $a$ is $[a):=\{x\in P: a\leq x\}$. Dually we have the \emph{principal downset} $(a]$. Given a subset $Q$ of $P$ and $a\in P$, $[a)_Q:=\{x\in Q: a\leq x\}$.

\begin{definition}[\cite{ChaKo08}]
A \emph{distributive nearlattice} is a join-semilattice $\A=\langle A,\vee,1\rangle$ with top element 1 such that for all $a\in A$, the principal upset $[a)$ is a bounded distributive lattice (with respect to the induced order).
\end{definition}

Let $\A=\langle A,\vee,1\rangle$ be a distributive nearlattice. For each $a\in A$, we denote the meet in $[a)$ by $\wedge_a$. It should be noted that for all $x,y\in A$, the meet $x\wedge y$ exists in $A$ if and only if $x,y$ have a common lower bound in $A$. Thus, for all $x,y\in[a)$, the meet of $x,y$ in $[a)$ coincides with their meet in $A$, that is, $x\wedge_ay=x\wedge y$. This should be kept in mind since we will use it without mention. A subset $F\subseteq A$ is called \emph{filter} if (i) $1\in F$; (ii) $F$ is an upset; and (iii) if $a,b\in F$ and $a\wedge b$ exists in $A$, then $a\wedge b\in F$. Let us denote by $\Fi(\A)$ the collection of all filters of $\A$. It is straightforward that $\Fi(\A)$ is a closure system. For $X\subseteq A$, we denote by $\Fig_A(X)$ the filter generated by $X$. Thus, we have that $\langle\Fi(\A),\cap,\veebar,\{1\},A\rangle$ is a distributive lattice where $F\veebar G=\Fig_A(F\cup G)$, for all $F,G\in\Fi(\A)$.

A non-empty subset $I\subseteq A$ is said to be an \emph{ideal} when (i) $I$ is a downset; (ii) if $a,b\in I$, then $a\vee b\in I$. A proper ideal $I$ of $A$ is called \emph{prime} when for all $a,b\in A$, if $a\wedge b$ exists and $a\wedge b\in I$, then $a\in I$ or $b\in I$.

\begin{theorem}[\cite{Ha06}]
Let $\A$ be a distributive nearlattice. If $I$ is an ideal and $F$ is a filter of $\A$ such that $I\cap F=\emptyset$, then there exists a prime filter $P$ of $A$ such that $I\subseteq P$ and $P\cap F=\emptyset$.
\end{theorem}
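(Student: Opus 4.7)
The plan is a standard Zorn argument adapted to the nearlattice setting. Let $\mathcal{K}$ be the collection of all ideals $J$ of $\A$ with $I \subseteq J$ and $J \cap F = \emptyset$; since $I \in \mathcal{K}$ it is nonempty, and directed unions of ideals are again ideals (downsets, closure under joins), so $\mathcal{K}$ is inductive. Let $P$ be a maximal element obtained from Zorn's lemma. I shall verify that $P$ satisfies the primeness condition of the definition above (reading the conclusion ``prime filter'' as a typo for the only primeness notion defined in the excerpt).

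Suppose, toward contradiction, that $a, b \in A$ are such that $a \wedge b$ exists and lies in $P$, but $a \notin P$ and $b \notin P$. Let $P_a$ and $P_b$ denote the ideals generated by $P \cup \{a\}$ and $P \cup \{b\}$; in a join-semilattice, these admit the explicit form $P_x = \{y : y \leq p \vee x \text{ for some } p \in P\}$. By the maximality of $P$, both $P_a$ and $P_b$ meet $F$, so one obtains $p_1, p_2 \in P$ and $f_1, f_2 \in F$ with $f_1 \leq p_1 \vee a$ and $f_2 \leq p_2 \vee b$.

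The decisive move is to transport the whole configuration into the bounded distributive lattice $[c)$, where $c := a \wedge b$. Set $p_i' := p_i \vee c$ and $f_i' := f_i \vee c$ for $i = 1, 2$. Then $p_1', p_2' \in P$ (an ideal is closed under joins and $c \in P$), $f_1', f_2' \in F$ (filters are upsets), and all four elements lie in $[c)$; since $c \leq a, b$ the inequalities are preserved, giving $f_1' \leq p_1' \vee a$ and $f_2' \leq p_2' \vee b$. Now $f_1' \wedge f_2'$ exists in $[c)$ and lies in $F$, while applying distributivity in $[c)$ to $(p_1' \vee a) \wedge (p_2' \vee b)$ rewrites it as the join of the four terms $p_1' \wedge p_2'$, $p_1' \wedge b$, $a \wedge p_2'$, $a \wedge b = c$, each of which is either bounded above by an element of $P$ or equal to $c \in P$, and hence lies in $P$. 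Consequently $(p_1' \vee a) \wedge (p_2' \vee b) \in P$, and by downward closure $f_1' \wedge f_2' \in P \cap F$, contradicting the defining property of $P$.

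The principal obstacle is the partial nature of meets in a nearlattice: one cannot form $f_1 \wedge f_2$ directly, nor distribute over the raw quadruple $\{p_1, a, p_2, b\}$, since these elements need not share any lower bound. The trick of joining each relevant element with $c = a \wedge b$ lifts every computation into the distributive lattice $[c)$, where classical distributivity and total meets are available and the Birkhoff-style separation argument proceeds unhindered.
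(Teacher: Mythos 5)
Your proof is correct. The paper itself gives no argument for this statement --- it is quoted from Hala\v{s} \cite{Ha06} --- so there is nothing internal to compare against; your Zorn-plus-separation argument is the standard one, and you have handled the one genuinely nearlattice-specific difficulty (partial meets) correctly by pushing everything into the bounded distributive lattice $[c)$ with $c=a\wedge b$, where $f_1'\wedge f_2'$ exists, lies in $F$, and is forced into $P$ by the four-term distributive expansion. Your reading of ``prime filter'' as a typo for ``prime ideal'' is also right: the paper defines only prime ideals, and every later invocation of this theorem (e.g.\ separating $(a\vee b]$ from $[\bigwedge X_{a\vee b})$) produces a prime ideal containing the given ideal and disjoint from the given filter. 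The only points worth making explicit in a polished write-up are that $P$ is proper (since $1\in F$ and $P\cap F=\emptyset$) and that the description $P_x=\{y: y\leq p\vee x \text{ for some } p\in P\}$ of the generated ideal uses that $P$ is nonempty, which holds because ideals are nonempty by definition.
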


Given two distributive nearlattices $\A_1$ and $\A_2$, a map $f\colon A_1\to A_2$ is said to be an \emph{N-homomorphism} if $f(1)=1$; for all $a,b\in A_1$, $f(a\vee b)=f(a)\vee f(b)$, and $f(a\wedge b)=f(a)\wedge f(b)$, whenever $a\wedge b$ exists in $A_1$.

The reader is referred to \cite{ChaHaKu07,ChaKo08,Ca15} for further information about distributive nearlattices.

\section{Dual atoms and boolean elements}\label{sec:atoms and boolean elements}

\begin{definition}
Let $\A$ be a distributive nearlattice and $a \in A$. We will say that $a$ is a {\it{dual atom}} if $a \neq 1$ and for each $x \in A$,  $a \leq x \leq 1$ implies $x=a$ or $x=1$. We say that $a$ is a {\it{boolean element}} if the principal filter $[a)$ is a boolean lattice.
\end{definition}

Let us denote by $\At(\A)$ and $\B(\A)$ the collections of all dual atoms of $\A$ and of all boolean elements of $\A$, respectively. It follows that $\At(\A)\subseteq\B(\A)$. If $a\in\B(\A)$ and $x\in[a)$, we shall denote the complement of $x$ in $[a)$ by $\neg_ax$. The next properties will be used throughout the paper without mention.

\begin{remark}
Let $a\in\At(\A)$. Then, for all $x\in A$, we have that $x\leq a$ or $x\vee a=1$. Moreover, if $\A$ is a finite distributive nearlattice, then for every $x<1$, there exists $a\in\At(\A)$ such that $x\leq a$.
\end{remark}

\begin{proposition}\label{prop: meet of atoms are booleans}
Let $\A$ be a distributive nearlattice. Let $a_1,\dots,a_n\in\At(\A)$ be such that $a_1\wedge\dots\wedge a_n$ exists in $A$. Then $a_1\wedge\dots\wedge a_n\in\B(\A)$.
\end{proposition}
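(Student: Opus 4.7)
Set $b := a_1 \wedge \cdots \wedge a_n$. Since $\A$ is a distributive nearlattice, the principal filter $[b)$ is a bounded distributive lattice with bottom $b$ and top $1$, and each $a_i$ sits in $[b)$. To show $b \in \B(\A)$, I would verify that $[b)$ is a boolean lattice by producing, for every $x \in [b)$, a complement of $x$ in $[b)$.

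The key observation is to split the indices according to the behaviour of $x$ relative to the dual atoms: let $I_x := \{i : x \leq a_i\}$ and $J_x := \{i : x \not\leq a_i\}$. By the Remark preceding the statement, each $j \in J_x$ satisfies $x \vee a_j = 1$. My candidate complement for $x$ would be
\[
y_x := \bigwedge_{j \in J_x} a_j,
\]
computed inside the lattice $[b)$, with the convention that the empty meet equals $1$.

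To verify that $y_x$ really is the complement of $x$ in $[b)$, I would exploit the distributivity of $[b)$ twice. The join side is immediate: $x \vee y_x = \bigwedge_{j \in J_x}(x \vee a_j) = 1$. For the meet side $x \wedge y_x = b$ I would split on whether $I_x$ is empty. If $I_x = \emptyset$, then distributivity in $[b)$ forces
\[
x \;=\; x \vee b \;=\; x \vee \bigwedge_{i=1}^{n} a_i \;=\; \bigwedge_{i=1}^{n}(x \vee a_i) \;=\; 1,
\]
so $y_x = b$ and $x \wedge y_x = b$. Otherwise $x \leq \bigwedge_{i \in I_x} a_i$, which yields
\[
x \wedge y_x \;\leq\; \Bigl(\bigwedge_{i \in I_x} a_i\Bigr) \wedge \Bigl(\bigwedge_{j \in J_x} a_j\Bigr) \;=\; \bigwedge_{k=1}^{n} a_k \;=\; b,
\]
and $b \leq x \wedge y_x$ is automatic because $x, y_x \in [b)$.

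The main subtlety I anticipate is the empty-$I_x$ case: there it is not at all apparent from the definition that $y_x$ should collapse to $b$, and the distributivity identity $x = \bigwedge_i (x \vee a_i)$ inside $[b)$ is precisely what forces $x = 1$ in that situation and rescues the argument. Once a complement has been produced for every element of $[b)$, the filter $[b)$ is boolean, and therefore $b \in \B(\A)$.
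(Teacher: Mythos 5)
Your proof is correct and is essentially the paper's argument in disguise: the paper's candidate complement $\neg x=\bigwedge_i\neg_i(a_i\vee x)$ evaluates to $1$ on the indices with $x\leq a_i$ and to $a_i$ on the others, so it is literally your $y_x=\bigwedge_{j\in J_x}a_j$, and both verifications rest on the same two facts (distributivity of $[b)$ for the join side, and that the $a_i$ jointly meet to $b$ for the meet side). Your version just bypasses the complement notation in the two-element lattices $[a_i)$, which is a harmless simplification.
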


\begin{proof}
Let $a^*=a_1\wedge\dots\wedge a_n$. Since $\langle[a^*),\wedge_{a^*},\vee,a^*,1\rangle$ is a bounded distributive lattice, it only remains to verify that each element of $[a^*)$ has a complement. For each $a_i$, let us denote the complement of an element $x\in[a_i)$ by $\neg_i x$. That is, for every $x\in[a_i)$, $\neg_i x\in[a_i)$ such that $x\wedge_{a_i}\neg_ix=a_i$ and $x\vee\neg_i x=1$.

Let $x\in[a^*)$. Thus $x=(a_1\vee x)\wedge\dots\wedge(a_n\vee x)$. We define the element $\neg x:=\neg_1(a_1\vee x)\wedge\dots\wedge\neg_n(a_n\vee x)$ (notice that the last meet exists because $a^*\leq a_i\leq\neg_i(a_i\vee x)$, for all $i$). It is clear that $\neg x\in[a^*)$. Now we show that $\neg x$ is the complement of $x$ in $[a^*)$. On the one hand, we have
\begin{align*}
\neg x\wedge x &= \bigwedge_{1\leq i\leq n}\neg_i(a_i\vee x) \wedge \bigwedge_{1\leq i\leq n}(a_i\vee x)
=
\bigwedge_{1\leq i\leq n}[\neg_i(a_i\vee x)\wedge(a_i\vee x)]\\
&=
\bigwedge_{1\leq i\leq n}a_i =a^*.
\end{align*}
On the other hand, we have
\[
\neg x\vee x =\left(\bigwedge_{1\leq i\leq n}\neg_i(a_i\vee x)\right)\vee x = \bigwedge_{1\leq i\leq n}\left(\neg_i(a_i\vee x)\vee x\right).
\]
Since every $a_i$ is a dual atom, it follows that
\[
\neg_i(a_i\vee x) =
\begin{cases}
1 \ &\text{ if } \ x\leq a_i\\
a_i   \ &\text{ if } \ x\nleq a_i
\end{cases}
\]
Hence, in any case, we obtain that $\neg_i(a_i\vee x)\vee x=1$. Then $\neg x\vee x=1$. We have proved that $\neg x$ is the complement of $x$ in $[a^*)$. Therefore, $\langle[a^*),\wedge_{a^*},\vee,a^*,1\rangle$ is a boolean lattice, and thus $a^*=a_1\wedge\dots\wedge a_n\in\B(\A)$.
\end{proof}

Let $\A$ be a distributive nearlattice and $a\in A$. We define the following set:
\[
X_a:=\{x\in\At(\A): a\leq x\}. 
\]
Thus, $X_a=\At(\A)\cap[a)=\At([a))$. If $\A$ is finite and $a\in\B(\A)$, then since $[a)$ is a finite boolean lattice it follows that
\[
a=\bigwedge X_a.
\]

\begin{lemma}\label{lem: aux 1}
Let $\A$ be a distributive nearlattice and $a,b\in A$. Then $X_{a\vee b}=X_a\cap X_b$. If $a\wedge b$ exists, then $X_{a\wedge b}=X_a\cup X_b$.
\end{lemma}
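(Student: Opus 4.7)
The plan is to treat the two equalities separately, observing that the first is essentially a restatement of the universal property of $\vee$, while the second requires a genuine use of distributivity together with the defining property of dual atoms.

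For $X_{a\vee b}=X_a\cap X_b$, I would simply note that for any $x\in\At(\A)$, the condition $a\vee b\leq x$ holds if and only if both $a\leq x$ and $b\leq x$. This yields the equality directly, with no appeal to distributivity or to the dual atom property.

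For $X_{a\wedge b}=X_a\cup X_b$ (assuming $a\wedge b$ exists), the inclusion $X_a\cup X_b\subseteq X_{a\wedge b}$ is immediate: a dual atom above $a$ or above $b$ lies above $a\wedge b$ because $a\wedge b\leq a,b$. The substantive direction is $X_{a\wedge b}\subseteq X_a\cup X_b$. I would argue by contradiction: fix a dual atom $x$ with $a\wedge b\leq x$, and suppose $a\nleq x$ and $b\nleq x$. Then the Remark on dual atoms gives $a\vee x=1=b\vee x$. Since $a\wedge b\leq a,b,x$, all three elements live in the bounded distributive lattice $[a\wedge b)$, where both meets and joins agree with those computed in $\A$. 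Distributivity in $[a\wedge b)$ yields
\[
(a\vee x)\wedge(b\vee x)=(a\wedge b)\vee x = x,
\]
while the same expression equals $1\wedge 1=1$. Hence $x=1$, contradicting that $x$ is a dual atom.

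The main obstacle, if any, is the bookkeeping step of recognising that $a$, $b$, and $x$ can be viewed inside the single bounded distributive lattice $[a\wedge b)$, which is what legitimises the use of distributivity and of the identities for meet and join. Once that ambient lattice is fixed, the combination of the dual atom property (forcing $a\vee x=b\vee x=1$) with the distributive law collapses the argument to the one-line contradiction above.
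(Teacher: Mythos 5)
Your proof is correct and follows essentially the same route as the paper: the first equality is a direct check, and for the nontrivial inclusion the key identity $(a\vee x)\wedge(b\vee x)=(a\wedge b)\vee x=x$ inside $[a\wedge b)$, combined with the dual-atom dichotomy, is exactly the paper's argument (merely phrased there as a case split rather than a contradiction).
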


\begin{proof}
It is straightforward to show directly that $X_{a\vee b}=X_a\cap X_b$.

It is clear that $X_a\cup X_b\subseteq X_{a\wedge b}$. Let now $x\in X_{a\wedge b}$. Since $a\wedge b\leq x$, we have that $x=(a\vee x)\wedge(b\vee x)$. Given that $x\in\At(\A)$, it follows that $(a\leq x \text{ or } a\vee x=1)$ and $(b\leq x \text{ or } b\vee x=1)$. If $a\vee x=1$ and $b\vee x=1$, then $x=1$, which is a contradiction. Hence, $a\leq x$ or $b\leq x$. That is, $x\in X_a\cup X_b$. Therefore $X_{a\wedge b}\subseteq X_a\cup X_b$.
\end{proof}

\begin{proposition}
Let $\A=\langle A,\vee,1\rangle$ be a finite distributive nearlattice. Then $\langle\B(\A),\vee,1\rangle$ is a nearlattice subalgebra of $\A$.
\end{proposition}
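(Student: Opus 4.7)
The plan is to verify three things: (i) $1 \in \B(\A)$; (ii) $\B(\A)$ is closed under $\vee$; (iii) whenever $a,b \in \B(\A)$ admit a meet in $A$, that meet lies in $\B(\A)$. The first is immediate since $[1)=\{1\}$ is trivially boolean. For (ii) and (iii) the guiding idea is to combine Proposition \ref{prop: meet of atoms are booleans} with the fact, noted in the paper, that in a finite distributive nearlattice every boolean element $c$ satisfies $c=\bigwedge X_c$ (because $[c)$ is a finite boolean lattice, whose bottom is the meet of its dual atoms).

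For the join case, let $a,b\in\B(\A)$. By Lemma \ref{lem: aux 1}, $X_{a\vee b}=X_a\cap X_b$. The key observation is that, even though we do not yet know $a\vee b\in\B(\A)$, we can compute it inside the boolean lattice $[a)$: since $a\vee b\in[a)$ and $[a)$ is a finite boolean lattice, $a\vee b$ is the meet of the dual atoms of $[a)$ lying above it, namely $\{x\in X_a:b\leq x\}=X_a\cap X_b$. This meet in $[a)$ coincides with the meet in $A$, so
\[
a\vee b=\bigwedge(X_a\cap X_b)=\bigwedge X_{a\vee b}.
\]
Since this exhibits $a\vee b$ as a meet of dual atoms of $\A$ whose meet exists in $A$, Proposition \ref{prop: meet of atoms are booleans} gives $a\vee b\in\B(\A)$.

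For the meet case, suppose $a,b\in\B(\A)$ and $c:=a\wedge b$ exists in $A$. By Lemma \ref{lem: aux 1}, $X_c=X_a\cup X_b$. Using $a=\bigwedge X_a$ and $b=\bigwedge X_b$ together with the fact that all elements of $X_a\cup X_b$ have the common lower bound $c$, a routine greatest-lower-bound argument gives
\[
c=a\wedge b=\bigwedge(X_a\cup X_b)=\bigwedge X_c,
\]
and again Proposition \ref{prop: meet of atoms are booleans} yields $c\in\B(\A)$.

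The main obstacle is conceptual rather than computational: one must resist the temptation to write $a\vee b=\bigwedge X_{a\vee b}$ directly, because this identity is only guaranteed after one knows $a\vee b\in\B(\A)$. The trick, as above, is to derive it instead by working inside the already-boolean filter $[a)$, after which Proposition \ref{prop: meet of atoms are booleans} closes the argument. Everything else reduces to bookkeeping with Lemma \ref{lem: aux 1}.
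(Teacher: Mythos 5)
Your proof is correct, and the join case takes a genuinely different route from the paper's. Where you differ: to establish the key identity $a\vee b=\bigwedge X_{a\vee b}$, the paper argues by contradiction, invoking the prime ideal separation theorem to produce a prime ideal $P$ containing $a\vee b$ but missing $\bigwedge X_{a\vee b}$, and then uses primeness together with the fact that two distinct dual atoms join to $1$ to reach a contradiction. You instead exploit that $a\vee b$ already lies in the finite boolean lattice $[a)$, whose dual atoms are exactly $X_a$, and apply the standard fact that every element of a finite boolean lattice is the meet of the coatoms above it; since $\{x\in X_a: a\vee b\leq x\}=X_a\cap X_b=X_{a\vee b}$ and finite meets in $[a)$ agree with meets in $A$, the identity falls out directly. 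This is the same fact the paper itself uses to get $a=\bigwedge X_a$ for $a\in\B(\A)$, just applied to an arbitrary element of $[a)$ rather than its bottom, so your argument is more elementary (no prime ideal machinery) and arguably more transparent; the paper's separation argument is more in the style of its later propositions, which reuse that technique. Your meet case is essentially identical to the paper's, and both conclude via Proposition \ref{prop: meet of atoms are booleans}. One small bookkeeping point: when $a\vee b=1$ the set $X_{a\vee b}$ is empty and Proposition \ref{prop: meet of atoms are booleans} does not literally apply, but you covered this by checking $1\in\B(\A)$ separately.
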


\begin{proof}
We need to prove that $\B(\A)$ is closed under $\vee$, and if $a,b\in\B(\A)$ are such that $a\wedge b$ exists in $A$, then $a\wedge b\in\B(\A)$.

Let $a,b\in\B(\A)$. Notice that $a\vee b\leq\bigwedge X_{a\vee b}$, and since $a,b\in\B(\A)$, it follows that $a=\bigwedge X_a$ and $b=\bigwedge X_b$. By Proposition \ref{prop: meet of atoms are booleans}, we have $\bigwedge X_{a\vee b}\in\B(\A)$. Let us show that $a\vee b=\bigwedge X_{a\vee b}$. Suppose, towards a contradiction, that $a\vee b\neq\bigwedge X_{a\vee b}$. So $\bigwedge X_{a\vee b}\nleq a\vee b$. Then, there is a prime ideal $P$ such that $a\vee b\in P$ and $\bigwedge X_{a\vee b}\notin P$. Thus, $a,b\in P$ and $P\cap X_{a\vee b}=\emptyset$. Since $\bigwedge X_a\in P$, $\bigwedge X_b\in P$ and $P$ is prime, it follows that there is $x_a\in X_a$ such that $x_a\in P$ and there is $x_b\in X_b$ such that $x_b\in P$. Since $x_a$ and $x_b$ are dual atoms, we have 
\[
x_a\vee x_b=
\begin{cases}
1 \ &\text{ if } \ x_a\neq x_b\\
x_a=x_b \ &\text{ if } \ x_a=x_b
\end{cases}
\]
Since $P$ is proper and $x_a\vee x_b\in P$, we have that $x_a=x_b$. Let $x:=x_a=x_b$. Thus $a\vee b\leq x$. Then $x\in X_{a\vee b}\cap P$, which is a contradiction. Hence $a\vee b=\bigwedge X_{a\vee b}\in\B(\A)$. Let now $a,b\in\B(\A)$ be such that $a\wedge b$ exists in $A$. By Lemma \ref{lem: aux 1}, we have
\[
\bigwedge X_{a\wedge b} = \bigwedge(X_a\cup X_b) = \bigwedge X_a\wedge\bigwedge X_b = a\wedge b.
\]
Then, it follows by Proposition \ref{prop: meet of atoms are booleans} that $a\wedge b=\bigwedge X_{a\wedge b}\in\B(\A)$. Therefore, $\B(\A)$ is a nearlattice subalgebra of $\A$.
\end{proof}

\begin{proposition} \label{prop: [a)_B(A) = [a)_A}
Let $\A$ be a finite distributive nearlattice. Let $a\in\B(\A)$ and $b\in A$. If $a\leq b$, then $b\in\B(\A)$. That is, $[a)_{\B(\A)}=[a)_A$.
\end{proposition}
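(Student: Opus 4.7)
The plan is to realize $[b)_A$ as the interval $[b,1]$ of the boolean lattice $[a)_A$, and then to invoke the standard fact that intervals of boolean lattices are boolean.

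First I would observe that since $a\leq b$, we have $[b)_A\subseteq[a)_A$, and for any $x,y\in[b)_A$ the common lower bound $a$ ensures that $x\wedge y$ exists in $A$ and agrees with the meet $x\wedge_a y$ computed in the bounded distributive lattice $[a)_A$. Joins of course coincide. Thus $[b)_A$ is precisely the lattice-theoretic interval $[b,1]$ inside $[a)_A$, and in particular a bounded distributive lattice.

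To show it is boolean, I would, given $x\in[b)_A$, propose the element
\[
x':=\neg_a x\vee b
\]
as complement, where $\neg_a x$ denotes the complement of $x$ in the boolean lattice $[a)_A$. Note $x'\in[b)_A$. Using the distributivity of $[a)_A$ together with $a\leq b\leq x$, a short computation yields
\[
x\wedge_a x' = (x\wedge_a\neg_a x)\vee(x\wedge_a b) = a\vee b = b,
\qquad
x\vee x' = (x\vee\neg_a x)\vee b = 1\vee b = 1,
\]
so $x'$ is a complement of $x$ in $[b)_A$. Hence $[b)_A$ is a boolean lattice, i.e. $b\in\B(\A)$. The asserted equality $[a)_{\B(\A)}=[a)_A$ follows at once: the inclusion $\subseteq$ is trivial, while $\supseteq$ is exactly what has just been proved.

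There is no genuine obstacle here; the only subtle point is to confirm that the boolean lattice structure one wants on $[b)_A$ coincides with what $A$ provides, which is immediate from the fact that any two elements of $[b)_A$ have $a$ as a common lower bound, so meets computed in $[a)_A$ agree with meets in $A$. Note that this proof does not even require finiteness of $\A$, so the result holds for arbitrary distributive nearlattices.
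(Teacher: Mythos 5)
Your proof is correct, but it takes a genuinely different route from the paper's. The paper works with the sets $X_a=\At(\A)\cap[a)$ of dual atoms: it uses finiteness to write $a=\bigwedge X_a$, then runs a prime-ideal separation argument to show $b=\bigwedge X_b$, and finally invokes Proposition \ref{prop: meet of atoms are booleans} to conclude $b\in\B(\A)$. You instead argue entirely inside the boolean lattice $[a)$: the principal upset $[b)$ is the interval $[b,1]$ of $[a)$ (meets agree because any two elements of $[b)$ have the common lower bound $a$, and the infimum of elements above $b$ is again above $b$), and the standard complement formula $x'=\neg_a x\vee b$ witnesses that this interval is boolean. Your computation $x\wedge x'=(x\wedge\neg_a x)\vee(x\wedge b)=a\vee b=b$ and $x\vee x'=1$ is valid by distributivity of $[a)$ and $a\leq b\leq x$. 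What your approach buys is twofold: it eliminates the hypothesis of finiteness, so the proposition holds for arbitrary distributive nearlattices (the paper's argument genuinely needs finiteness for $a=\bigwedge X_a$ and for $\bigwedge X_b$ to exist); and it yields as a by-product the explicit formula $\neg_b x=\neg_a x\vee b$, a special case of which the paper derives separately in Corollary \ref{coro: B(L) boolean lattice}. What the paper's approach buys is coherence with the surrounding development, which is built throughout on the $X_a$ machinery and prime-ideal arguments. Both proofs are sound.
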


\begin{proof}
Let $a\in \B(\A)$ and $b\in A$ be such that $a\leq b$. Since $a$ is a boolean element, we have $a=\bigwedge X_a$. We also have that $b\leq\bigwedge X_b$. Suppose that $\bigwedge X_b\nleq b$. Then, there is a prime ideal $P$ such that $b\in P$ and $\bigwedge X_b\notin P$. Thus, $a\in P$ and $X_b\cap P=\emptyset$. Since $\bigwedge X_a\in P$ and $P$ is prime, we obtain that there is $x\in X_a\cap P$. Then, $x\notin X_b$. Since $x\in\At(\A)$ and $b\nleq x$, it follows that $b\vee x=1$. This is a contradiction because $b,x\in P$ and $P$ is a proper ideal. Hence $b=\bigwedge X_b\in\B(\A)$.
\end{proof}

A \emph{semi-boolean algebra} (\cite{Abb67}) is a join-semilattice with a top element such that every principal upset is a Boolean lattice. In \cite{Abb67} Abbot show that the semi-boolean algebras are in a one-to-one correspondence with the implication algebras (also known as Tarski algebras).

\begin{theorem}\label{theo: B(A) semi-boolean alg}
Let $\A=\langle A,\vee,1\rangle$ be a finite distributive nearlattice. Then, the nearlattice subalgebra $\langle\B(\A),\vee,1\rangle$ is a semi-boolean algebra.
\end{theorem}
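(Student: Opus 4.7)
The plan is to observe that, thanks to the two preceding propositions, the statement is essentially a direct corollary, so the proof reduces to assembling previously established facts in the correct order. A semi-boolean algebra is a join-semilattice with top element $1$ in which every principal upset is a Boolean lattice; I already have the join-semilattice structure with top from the previous proposition, so the only substantive task is to check the principal-upset condition.

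First, I would note that by the previous proposition $\langle \B(\A), \vee, 1\rangle$ is a nearlattice subalgebra of $\A$, hence it is a join-semilattice with top element $1$, and its order is simply the restriction of the order of $\A$. Therefore, for every $a \in \B(\A)$, the principal upset of $a$ computed inside $\B(\A)$ is $[a)_{\B(\A)} = \{x \in \B(\A) : a \leq x\}$.

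Next, I would apply Proposition \ref{prop: [a)_B(A) = [a)_A}, which tells us that whenever $a \in \B(\A)$ and $a \leq b$ in $\A$, we automatically have $b \in \B(\A)$; equivalently, $[a)_{\B(\A)} = [a)_A$. By the very definition of a boolean element, $[a)_A$ is a Boolean lattice. Combining these two facts, $[a)_{\B(\A)}$ is a Boolean lattice for every $a \in \B(\A)$, which is exactly the definition of semi-boolean algebra.

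Since every ingredient has already been proved, there is no real obstacle here; the argument is a one-line assembly. The only thing one might want to be a little careful about is remembering that the meet operation in $[a)_{\B(\A)}$ agrees with the meet $\wedge_a$ in $[a)_A$, which is immediate from the fact that $\B(\A)$ is a nearlattice subalgebra and the identity $[a)_{\B(\A)} = [a)_A$, so no compatibility issue arises.
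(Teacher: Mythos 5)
Your proof is correct and follows essentially the same route as the paper's: invoke the subalgebra proposition for the join-semilattice structure, then use Proposition \ref{prop: [a)_B(A) = [a)_A} to identify $[a)_{\B(\A)}$ with $[a)_A$, which is Boolean by definition of a boolean element. No issues.
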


\begin{proof}
Let $a\in\B(\A)$. By Proposition \ref{prop: [a)_B(A) = [a)_A}, we have $[a)_{\B(\A)}=[a)_A$, and since $\B(\A)$ is a nearlattice subalgebra of $\A$, it follows that $[a)_{\B(\A)}$ is a boolean lattice. Hence $\B(\A)$ is a semi-boolean algebra.
\end{proof}

\begin{proposition}
Let $A$ be a finite distributive nearlattice. The following are equivalent:
\begin{enumerate}[{\normalfont (1)}]
	\item $\bigwedge\At(\A)$ exists in $A$;
	\item There is $a\in A$ such that $\B(\A)=[a)$;
	\item $\B(\A)$ is a boolean lattice.
\end{enumerate}
\end{proposition}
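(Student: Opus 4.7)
The plan is to prove the cyclic implications $(1)\Rightarrow(2)\Rightarrow(3)\Rightarrow(1)$, using three key facts already established: Proposition \ref{prop: meet of atoms are booleans} (meets of dual atoms, when they exist, are boolean), Proposition \ref{prop: [a)_B(A) = [a)_A} (upward closure of $\B(\A)$ above any boolean element), and the identity $a=\bigwedge X_a$ valid for every $a\in\B(\A)$ in a finite distributive nearlattice.

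For $(1)\Rightarrow(2)$, I would set $a:=\bigwedge\At(\A)$. By Proposition \ref{prop: meet of atoms are booleans} we have $a\in\B(\A)$, and then Proposition \ref{prop: [a)_B(A) = [a)_A} immediately gives $[a)\subseteq\B(\A)$. For the reverse inclusion, if $b\in\B(\A)$, then $b=\bigwedge X_b$ with $X_b\subseteq\At(\A)$; since $a$ is a lower bound of all of $\At(\A)$, it is in particular a lower bound of $X_b$, hence $a\leq b$ and $b\in[a)$. For $(2)\Rightarrow(3)$, if $\B(\A)=[a)$ for some $a\in A$, then $a\in[a)=\B(\A)$, so $a$ is boolean; by definition $[a)$ is a boolean lattice, and therefore so is $\B(\A)$. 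For $(3)\Rightarrow(1)$, let $a$ denote the minimum of the boolean lattice $\B(\A)$. Since $\At(\A)\subseteq\B(\A)$, every dual atom lies above $a$, so $X_a=\At(\A)$. Because $a\in\B(\A)$, the identity $a=\bigwedge X_a$ holds (computed in the finite boolean lattice $[a)$, and agreeing with the meet in $A$), so $\bigwedge\At(\A)$ exists in $A$ and equals $a$.

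The routine verifications are straightforward; the only point that deserves care is the interplay between meets computed inside $[a)$ and meets computed in $A$, which is handled by the remark in the preliminaries that the two coincide whenever the elements lie in a common principal upset. The essentially nontrivial content — that the meet of all dual atoms, when it exists, is itself boolean, and that $\B(\A)$ is upward closed above any of its elements — has already been absorbed into Propositions \ref{prop: meet of atoms are booleans} and \ref{prop: [a)_B(A) = [a)_A}, so no new combinatorial work is required.
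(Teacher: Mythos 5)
Your proof is correct and follows essentially the same route as the paper's: $(1)\Rightarrow(2)$ via Propositions \ref{prop: meet of atoms are booleans} and \ref{prop: [a)_B(A) = [a)_A} together with $b=\bigwedge X_b$, and $(3)\Rightarrow(1)$ by observing $\At(\A)\subseteq\B(\A)=[a)$. The only cosmetic difference is in $(2)\Rightarrow(3)$, where you invoke the definition of boolean element directly while the paper cites Theorem \ref{theo: B(A) semi-boolean alg}; both are immediate.
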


\begin{proof}
$(1)\Rightarrow(2)$ Let $a:=\bigwedge\At(\A)$. It follows by Proposition \ref{prop: meet of atoms are booleans} that $a\in\B(\A)$. Let $b\in\B(\A)$. Then, $a=\bigwedge\At(\A)\leq\bigwedge X_b=b$. Thus $b\in[a)$. Hence $\B(\A)\subseteq[a)$. Now, by Proposition \ref{prop: [a)_B(A) = [a)_A}, we have $[a)=[a)_{\B(\A)}\subseteq\B(\A)$. Therefore, $\B(\A)=[a)$.

$(2)\Rightarrow(3)$ It follows straightforward by Theorem \ref{theo: B(A) semi-boolean alg}.

$(3)\Rightarrow(1)$ Let $a\in A$ be the least element of $\B(\A)$. Thus $[a)=[a)_{\B(\A)}=\B(\A)$. Since $\At(\A)\subseteq\B(\A)=[a)$, we obtain that $\bigwedge\At(\A)$ exists in $A$.
\end{proof}

\begin{corollary}\label{coro: B(L) boolean lattice}
If $\mathbf{L}$ is a finite distributive lattice, then $\B(\mathbf{L})$ is a boolean lattice. Moreover, for each $a\in\B(\L)$ and $x\in[a)$, we have $\neg_ax=\neg x\vee a$, where $\neg x$ denotes the complement of $x$ in $\B(\mathbf{L})$.
\end{corollary}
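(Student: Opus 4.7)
The plan is to deduce the first assertion directly from the implication $(1)\Rightarrow(3)$ of the preceding proposition, and to pin down the complement formula by a short distributive identity. For the first claim, since $\L$ is a finite lattice every finite meet exists, so $\bigwedge\At(\L)$ exists in $\L$ (with the convention that the meet is $1$ if $\At(\L)=\emptyset$). Hence condition (1) of the preceding proposition is satisfied, and we conclude that $\B(\L)$ is a boolean lattice.

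For the second claim, I fix $a\in\B(\L)$ and $x\in[a)_\L$ and aim to identify the complement of $x$ in $[a)$. First, by Proposition \ref{prop: [a)_B(A) = [a)_A}, $x$ itself lies in $\B(\L)$, so the complement $\neg x$ of $x$ in the boolean lattice $\B(\L)$ is well defined. Writing $b_0:=\bigwedge\At(\L)$ for the bottom of $\B(\L)$, one has $b_0\leq a$ together with the boolean identities $x\wedge\neg x=b_0$ and $x\vee\neg x=1$. Since $[a)$ is a distributive lattice, complements in $[a)$ are unique, so it is enough to check that $\neg x\vee a$ behaves as a complement of $x$ relative to $a$.

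The two verifications reduce to
\[
(\neg x\vee a)\vee x=1 \qquad\text{and}\qquad (\neg x\vee a)\wedge x=a.
\]
The first is immediate from $\neg x\vee x=1$. For the second, distributivity of $\L$ yields $(\neg x\wedge x)\vee(a\wedge x)=b_0\vee a$, and this equals $a$ because $b_0\leq a$. From this one reads off $\neg_a x=\neg x\vee a$.

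The only mild subtlety I anticipate is that the complement and meet identities for $\neg x$ are a priori statements inside $\B(\L)$, while the computations above take place in $\L$; this is harmless because the earlier proposition shows that $\B(\L)$ is closed under those meets of $\L$ that already exist, so the two meet operations agree on elements of $\B(\L)$ and the identities transport verbatim. Beyond this point the argument is just the standard trick of relativizing a complement in the ambient boolean lattice by joining with the bottom of the smaller principal filter.
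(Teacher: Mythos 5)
Your proposal is correct and follows essentially the same route as the paper: deduce the first claim from the preceding proposition, then verify that $\neg x\vee a$ satisfies the two complement identities relative to $a$ using distributivity and the fact that the bottom $c=\bigwedge\At(\L)$ of $\B(\L)$ lies below $a$. The extra remarks (uniqueness of complements in a distributive lattice, agreement of the meets of $\B(\L)$ and $\L$) are points the paper leaves implicit but do not change the argument.
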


\begin{proof}
From the previous proposition, it is clear that $\B(\L)$ is a boolean lattice. Let $a\in\B(\L)$ and $x\in[a)$. Since $[a)$ is a boolean lattice, there is unique $\neg_a x \in [a)$ such that $x \vee \neg_a x = 1$ and $x \wedge \neg_a x = a$. As $a \leq x$, by Proposition \ref{prop: [a)_B(A) = [a)_A} we have $x \in\B(\L)$ and, there is $\neg x \in\B(\L)$ such that $x \vee \neg x = 1$ and $x \wedge \neg x = c$, where $c \in A$ is the least element of $\B(\L)$. Consider the element $\neg x \vee a \in [a)$. So, $(\neg x \vee a) \vee x = (\neg x \vee x) \vee a = 1 \vee a = 1$ and $(\neg x \vee a) \wedge x = (\neg x \wedge x) \vee (a \wedge x) = c \vee a = a$. Hence, we have $\neg_a x = \neg x \vee a$.
\end{proof}

\section{Annihilators}\label{sec:annihilators}

Let $\A=\langle A,\vee,1\rangle$ be a distributive nearlattice. For every $a\in A$, let $a^\top=\{x\in A: a\vee x=1\}$ (see \cite{CaCe15,ChaKo07}). It follows that $a^\top\in\Fi(\A)$, for all $a\in A$. We will say that an element $a\in A$ is \emph{dense} if $a^\top=\{1\}$. Denote by $\D(A)$ the set of all dense elements of $A$. 

\begin{proposition}
Let $\A$ be a finite distributive nearlattice. The following are equivalent:
\begin{enumerate}[{\normalfont (1)}]
	\item $\bigwedge \At(\A)$ exists in $A$;
	\item There is $a\in A$ such that $\D(A)=(a]$;
	\item $\D(A)$ is not empty.
\end{enumerate}
\end{proposition}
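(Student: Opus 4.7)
The plan is to reduce everything to a single characterization: in a finite distributive nearlattice, an element $a$ is dense if and only if $a\leq m$ for every $m\in\At(\A)$.

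I would first establish this characterization. For the forward direction, if $a\in\D(A)$ and $m\in\At(\A)$, then $a\vee m\neq 1$ (otherwise density would force $m=1$, contradicting $m<1$); since $m\leq a\vee m$ and $m$ is a dual atom, the only possibility is $a\vee m=m$, i.e., $a\leq m$. The converse uses finiteness via the Remark: if $a\leq m$ for every $m\in\At(\A)$ and yet $a\vee x=1$ with $x<1$, then by the Remark there is some $m\in\At(\A)$ with $x\leq m$, and combined with $a\leq m$ this gives $1=a\vee x\leq m<1$, a contradiction.

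With this characterization in hand, the three implications become essentially bookkeeping. For $(1)\Rightarrow(2)$, set $a:=\bigwedge\At(\A)$; then for any $x\in A$, $x\leq a$ iff $x\leq m$ for every $m\in\At(\A)$ iff $x\in\D(A)$, so $\D(A)=(a]$. The implication $(2)\Rightarrow(3)$ is immediate, since $a\in(a]=\D(A)$. For $(3)\Rightarrow(1)$, pick any $a\in\D(A)$; by the characterization, $a$ is a common lower bound of the finite set $\At(\A)\subseteq[a)$, and since $[a)$ is a bounded distributive lattice (by the definition of a distributive nearlattice), the meet $\bigwedge\At(\A)$ exists in $[a)$ and therefore in $A$.

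The main subtlety is the characterization lemma, specifically its $(\Leftarrow)$ direction, where the finiteness hypothesis enters non-trivially through the Remark. Once that is in place, the three implications are formal.
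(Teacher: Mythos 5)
Your proof is correct and follows essentially the same route as the paper: both arguments hinge on the observation that an element is dense exactly when it lies below every dual atom (using the dual-atom dichotomy in one direction and the finiteness Remark in the other), and both obtain existence of $\bigwedge\At(\A)$ from the fact that $\At(\A)$ sits inside the finite bounded lattice $[a)$ for a dense $a$. Factoring this observation out as an explicit characterization lemma is a tidy reorganization of the same steps, not a different argument.
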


\begin{proof}
$(1)\Rightarrow(2)$ Let $a:=\bigwedge\At(\A)$. Let $x\in a^\top$. So $x\vee a=1$. Suppose that $x<1$. Then, there is $b\in\At(\A)$ such that $x\leq b$. Thus $b=b\vee a=1$, which is a contradiction. Hence $x=1$. It follows that $a^\top=\{1\}$, and thus $a\in\D(A)$. We obtain that $(a]\subseteq\D(A)$. Let now $b\in\D(A)$. So $b^\top=\{1\}$. Let $x\in\At(\A)$. Since $x\notin b^\top$, we have $b\vee x\neq 1$. Thus $b\leq x$. Then $b\leq\bigwedge\At(\A)=a$. Hence $b\in(a]$. Therefore, $\D(A)\subseteq(a]$.

$(2)\Rightarrow(3)$ It is immediate.

$(3)\Rightarrow(1)$ Let us show that $\At(\A)$ has a lower bound. As $\D(A) \neq \emptyset$, there is $a \in A$ such that $a^{\top} = \{1\}$. Let $x \in \At(\A)$ and suppose that $a \nleq x$.  Thus, $x \vee a = 1$. Then $x \in a^{\top} = \{1\}$, which is a contradiction because $x \in \At(\A)$. Hence $a \leq x$, for all $x \in\At(\A)$. Therefore, $\bigwedge \At(\A)$ exists in $A$.
\end{proof}

\begin{proposition} \label{prop: injective ()^top + b^top = a^top}
Let $\A$ be a finite distributive nearlattice. The following properties are satisfied:
\begin{enumerate}
\item Let $a,b\in \B(\A)$. Then, $a \leq b$ if and only if $a^{\top} \subseteq b^{\top}$.
\item For each $a \in A$, we have $a^\top=\left(\bigwedge X_a\right)^\top$.
\end{enumerate}
\end{proposition}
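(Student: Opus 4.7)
The plan is to prove the two parts by direct arguments, using finiteness only to guarantee the existence of dual atoms above any non-top element and of the meet $\bigwedge X_a$.

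For part (1), the forward direction is immediate: if $a \le b$ and $x \in a^\top$, then $1 = a \vee x \le b \vee x$, so $x \in b^\top$. For the converse, assume $a,b \in \B(\A)$ with $a^\top \subseteq b^\top$. Since both are boolean, we have $a = \bigwedge X_a$ and $b = \bigwedge X_b$, so it suffices to prove $X_b \subseteq X_a$. Take $x \in X_b$ and suppose toward contradiction that $a \nleq x$. Because $x$ is a dual atom, this forces $a \vee x = 1$, i.e.\ $x \in a^\top \subseteq b^\top$, so $b \vee x = 1$; but $b \le x$ gives $b \vee x = x \ne 1$, a contradiction. Hence $a \le x$ and $x \in X_a$, so $a = \bigwedge X_a \le \bigwedge X_b = b$.

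For part (2), first note that $\bigwedge X_a$ exists: every element of $X_a = \At(\A)\cap[a)$ lies above $a$, so the meet is computed in the bounded distributive lattice $[a)$ and agrees with the meet in $A$. Since $a$ is a lower bound of $X_a$, we get $a \le \bigwedge X_a$, whence $a^\top \subseteq (\bigwedge X_a)^\top$ by the monotonicity already used in part (1). For the reverse inclusion, let $x \in (\bigwedge X_a)^\top$, i.e.\ $(\bigwedge X_a) \vee x = 1$, and suppose toward contradiction that $a \vee x < 1$. By finiteness of $\A$, there is a dual atom $y \in \At(\A)$ with $a \vee x \le y$. Then $a \le y$ gives $y \in X_a$, so $\bigwedge X_a \le y$; combined with $x \le y$ this yields $(\bigwedge X_a)\vee x \le y < 1$, contradicting $x \in (\bigwedge X_a)^\top$. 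Therefore $a \vee x = 1$ and $x \in a^\top$.

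The only mildly delicate point is making sure the meet $\bigwedge X_a$ in part (2) is well-defined and coincides with the meet in $A$; after that, both parts reduce to a clean dual-atom argument powered by the fact that if $x$ is a dual atom and $a \nleq x$ then $a \vee x = 1$.
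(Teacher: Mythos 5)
Your proof is correct and follows essentially the same route as the paper: both parts come down to the facts that a boolean element equals the meet of the dual atoms above it, that a dual atom $x$ satisfies $a\leq x$ or $a\vee x=1$, and that finiteness supplies a dual atom above any element below $1$. The only (harmless) variation is in part (2), where you bound $(\bigwedge X_a)\vee x$ by the dual atom $y$ directly instead of first distributing the join over the meet as the paper does.
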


\begin{proof}
$(1)$ It is easy to show that $a \leq b$ implies $a^{\top} \subseteq b^{\top}$. Conversely, assume that $a^{\top} \subseteq b^{\top}$. Let $x\in X_b$. Then $x \in \At(\A)$ and $b \vee x = x \neq 1$, i.e., $x \notin b^{\top}$. So, $x \notin a^{\top}$. Since $a\vee x\neq 1$ and $x$ is dual atom, we have  $a\leq x$. Thus $x \in X_{a}$. Thus $X_{b} \subseteq X_{a}$. Therefore, $a = \bigwedge X_{a} \leq \bigwedge X_{b}=b$.

$(2)$ Let $a \in A$ and  $b := \bigwedge X_{a}$. By Proposition \ref{prop: meet of atoms are booleans}, $b \in \B(\A)$. Since $a \leq b$ we have $a^{\top} \subseteq b^{\top}$. Let $x \in b^{\top}$. Thus, 
\begin{equation*}
1=b\vee x=\left( \bigwedge X_{a} \right) \vee x=\bigwedge \{y \vee x \colon y \in X_{a} \}. 
\end{equation*} 
Then $y \vee x = 1$, for all $y\in X_{a}$. Suppose that $a \vee x <1$. So, there exists $y \in \At(\A)$ such that $a \vee x \leq y<1$. Thus $a \leq y$ and $x\leq y$. It follows that $y\in X_{a}$. Then $y=y\vee x=1$, which is a contradiction. Hence, $a \vee x = 1$. Thus $x\in a^{\top}$. Then, we obtain that $b^{\top} \subseteq a^{\top}$. Therefore, $a^\top=b^\top=\left(\bigwedge X_a\right)^\top$.
\end{proof}

Given a finite distributive nearlattice $\A$ we define the map $\pi_A\colon A\to\B(\A)$ as follows: for every $a\in A$,
\[
\pi_A(a)=\bigwedge X_a.
\]
From Proposition \ref{prop: meet of atoms are booleans}, it follows that $\pi_A$ is well defined.

\begin{remark}\label{rem: basic properties of pi_A}
Notice that $a\leq\pi_A(a)$, for all $a\in A$. Moreover, for each $a\in  A$, $\pi_A(a)$ is the least boolean element $b$ such that $a\leq b$. Thus, we also have that $\pi_A(a)=a$, for all $a\in\B(\A)$.
\end{remark}

\begin{proposition}
Let $\A$ be a finite distributive nearlattice. Then, the map $\pi_A\colon A\to\B(\A)$ is an onto N-homomorphism.
\end{proposition}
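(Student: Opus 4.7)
The plan is to verify each of the four required properties — preservation of $1$, preservation of $\vee$, preservation of $\wedge$ whenever the meet exists, and surjectivity — relying primarily on the universal characterization of $\pi_A$ recorded in Remark \ref{rem: basic properties of pi_A}: for every $a\in A$, the element $\pi_A(a)$ is the \emph{least} element of $\B(\A)$ lying above $a$, and $\pi_A$ fixes $\B(\A)$ pointwise. Surjectivity is then immediate: given $b\in\B(\A)$, we have $\pi_A(b)=b$. The normalization $\pi_A(1)=1$ follows from the same remark since $[1)=\{1\}$ is trivially a boolean lattice, hence $1\in\B(\A)$.

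For preservation of joins, I would avoid a direct computation with $\bigwedge(X_a\cap X_b)$ and argue instead via the universal property combined with closure of $\B(\A)$ under $\vee$. On the one hand, $\pi_A(a)\vee \pi_A(b)\in\B(\A)$ because $\B(\A)$ is a nearlattice subalgebra of $\A$, and it lies above $a\vee b$ since $a\leq \pi_A(a)$ and $b\leq \pi_A(b)$; minimality of $\pi_A(a\vee b)$ then gives $\pi_A(a\vee b)\leq \pi_A(a)\vee \pi_A(b)$. Conversely, $a,b\leq a\vee b\leq \pi_A(a\vee b)\in\B(\A)$, so minimality of $\pi_A(a)$ and $\pi_A(b)$ individually yields $\pi_A(a), \pi_A(b)\leq \pi_A(a\vee b)$, and hence $\pi_A(a)\vee \pi_A(b)\leq \pi_A(a\vee b)$.

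For preservation of meets, assume $a\wedge b$ exists in $A$. I would first observe that $a\wedge b$ is a common lower bound of $\pi_A(a)$ and $\pi_A(b)$, so $\pi_A(a)\wedge \pi_A(b)$ exists in $A$. Now Lemma \ref{lem: aux 1} gives $X_{a\wedge b}=X_a\cup X_b$, so
\[
\pi_A(a\wedge b)=\bigwedge(X_a\cup X_b)=\bigwedge X_a \wedge \bigwedge X_b=\pi_A(a)\wedge \pi_A(b).
\]
The main subtlety of the proof is the join case: one cannot simply split $\bigwedge(X_a\cap X_b)$ as a join of two meets, so the universal-property argument is what makes that step go through cleanly. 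Everything else reduces to bookkeeping with the definitions and results already established.
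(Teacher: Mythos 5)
Your proof is correct and takes essentially the same route as the paper: the meet case is handled by the same appeal to Lemma \ref{lem: aux 1}, and your universal-property argument for joins is just a repackaging of the paper's own two-inequality argument via the characterization of $\pi_A(a)$ as the least boolean element above $a$. Your explicit check that $\pi_A(a)\wedge\pi_A(b)$ exists before splitting $\bigwedge(X_a\cup X_b)$ is a small but welcome addition of care.
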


\begin{proof}
Let $a,b\in A$. By definition and Lemma \ref{lem: aux 1}, we have $\pi_A(a\vee b)=\bigwedge X_{a\vee b}=\bigwedge\left(X_a\cap X_b\right)$. Since $X_a\cap X_b\subseteq X_a,X_b$, it follows that $\pi_A(a)\vee\pi_A(b)\leq \pi_A(a\vee b)$. By Remark \ref{rem: basic properties of pi_A}, we have $a\leq\pi_A(a)$ and $b\leq\pi_A(b)$. Thus $a\vee b\leq\pi_A(a)\vee\pi_A(b)$. Then, since $\pi_A(a)\vee\pi_A(b)$ is a boolean element, it follows by Remark \ref{rem: basic properties of pi_A} that $\pi_A(a\vee b)\leq \pi_A(a)\vee\pi_A(b)$. Therefore, $\pi_A(a\vee b)=\pi_A(a)\vee\pi_A(b)$.

Now assume that $a\wedge b$ exists in $\A$. Then,
\[
\pi_A(a\wedge b)=\bigwedge X_{a\wedge b}=\bigwedge\left(X_a\cup X_b\right)=\bigwedge X_a\wedge\bigwedge X_b=\pi_A(a)\wedge\pi_A(b).
\]
Finally, $\pi_\A(1)=\bigwedge X_1=\bigwedge\emptyset=1$.
\end{proof}

It is well known that if $\L$ is a bounded distributive lattice, then the subset $\comp(\L)$ of all complemented elements of $L$ form a boolean algebra. Given that in a distributive nearlattice may not exists the least element, we generalize the concept of complemented element as follows. Recall that $\veebar$ denotes the supremum in $\Fi(\A)$ and $a^\top\in\Fi(\A)$, for all $a\in A$.

\begin{definition}
Let $\A$ be a distributive nearlattice. An element $a\in A$ is said to be \emph{complemented} if $[a)\veebar a^\top=A$.
\end{definition}

We denote by $\C(\A)$ the set of all complemented elements of $\A$.

\begin{proposition}\label{prop: G(A) = C(A)}
If $\A$ is a bounded distributive lattice, then $\comp(\A)=\C(\A)$.
\end{proposition}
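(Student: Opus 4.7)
The plan is to prove the two inclusions separately, using distributivity and the standard description of $\veebar$ in a bounded distributive lattice, namely that $F\veebar G=\{x\in A:x\geq f\wedge g \text{ for some }f\in F,\,g\in G\}$.

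For the inclusion $\comp(\A)\subseteq\C(\A)$, I start with $a\in\comp(\A)$ with classical complement $b$ (so $a\vee b=1$ and $a\wedge b=0$). Given an arbitrary $x\in A$, I use distributivity to write
\[
x=x\vee 0=x\vee(a\wedge b)=(x\vee a)\wedge(x\vee b).
\]
Clearly $x\vee a\in[a)$, while $a\vee(x\vee b)=x\vee(a\vee b)=1$ gives $x\vee b\in a^\top$. Hence $x$ lies in $[a)\veebar a^\top$, showing $A\subseteq[a)\veebar a^\top$ and so $a\in\C(\A)$.

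For the converse $\C(\A)\subseteq\comp(\A)$, I exploit the fact that a bounded distributive lattice has a least element $0$. If $a\in\C(\A)$, then $0\in A=[a)\veebar a^\top$, so there exist $u\in[a)$ and $v\in a^\top$ with $u\wedge v\leq 0$, hence $u\wedge v=0$. Since $a\leq u$ we get $a\wedge v\leq u\wedge v=0$, and $v\in a^\top$ gives $a\vee v=1$. Therefore $v$ is a complement of $a$ in $\A$, so $a\in\comp(\A)$.

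The only mildly delicate point is invoking the right description of $\veebar$: in a nearlattice the join in $\Fi(\A)$ can be awkward because binary meets need not exist, but in a bounded distributive lattice everything collapses to the familiar formula above, so both directions reduce to a one-line distributive-law computation. I do not anticipate any real obstacle beyond making this identification explicit.
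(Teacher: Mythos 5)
Your proof is correct and follows essentially the same route as the paper's: the forward inclusion via the distributive identity $x=(x\vee a)\wedge(x\vee b)$, and the reverse inclusion by locating $0$ in $[a)\veebar a^\top$ and extracting a complement. The extra care you take in justifying the description of $\veebar$ and in passing from $u\wedge v=0$ to $a\wedge v=0$ only makes explicit what the paper leaves implicit.
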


\begin{proof}
Let $a\in \C(\A)$. So $[a)\veebar a^\top=A$. Let $0$ be the least element of $A$. Since $0\in[a)\veebar a^\top$, there is $b\in a^\top$ such that $a\wedge b=0$. Thus, we have $a\vee b=1$ and $a\wedge b=0$. Then, $b$ is the complement of $a$ in $A$. Hence $a\in\comp(A)$.

Let now $a\in\comp(A)$. Let $a^*\in A$ be the complement of $a$. Thus $a\wedge a^*=0$ and $a\vee a^*=1$. Let $x\in A$. Then $(a\vee x)\wedge(a^*\vee x)=(a\wedge a^*)\vee x=x$. Since $x\vee a\in[a)$ and $a^*\vee x\in a^\top$, it follows that $x\in[a)\veebar a^\top$. Hence $[a)\veebar a^\top=A$. Therefore, $a\in\C(\A)$.
\end{proof}

\begin{proposition}
Let $\A=\langle A,\vee,1\rangle$ be a distributive nearlattice. Then $\langle\C(\A),\vee,1\rangle$ is a nearlattice subalgebra of $\A$.
\end{proposition}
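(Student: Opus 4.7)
The plan is to verify three things: $1 \in \C(\A)$, that $\C(\A)$ is closed under $\vee$, and that $\C(\A)$ is closed under existing meets. The first is immediate, since $1^\top = A$ and so $[1) \veebar 1^\top = A$. For the two closure conditions, the main tool will be the distributivity of the filter lattice $\langle \Fi(\A), \cap, \veebar\rangle$ already recorded in the preliminaries, combined with two elementary identities relating the operators $[\cdot)$ and $(\cdot)^\top$ to $\vee$ and existing meets in $\A$.

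For closure under $\vee$, fix $a, b \in \C(\A)$ and observe two things. First, $[a \vee b) = [a) \cap [b)$ inside $\Fi(\A)$. Second, $a^\top \cup b^\top \subseteq (a \vee b)^\top$, since $y \vee a = 1$ already forces $y \vee (a \vee b) = 1$; hence $a^\top \veebar b^\top \subseteq (a \vee b)^\top$. Using distributivity of $\Fi(\A)$,
\[
(a^\top \veebar b^\top) \veebar \bigl([a) \cap [b)\bigr) = \bigl((a^\top \veebar b^\top) \veebar [a)\bigr) \cap \bigl((a^\top \veebar b^\top) \veebar [b)\bigr),
\]
and each factor on the right contains $a^\top \veebar [a) = A$ or $b^\top \veebar [b) = A$, so equals $A$. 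Thus $[a \vee b) \veebar (a \vee b)^\top \supseteq [a \vee b) \veebar (a^\top \veebar b^\top) = A$.

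For closure under existing meets, suppose $a, b \in \C(\A)$ and $a \wedge b$ exists in $A$. Then $[a), [b) \subseteq [a \wedge b)$, and one needs the dual identity $(a \wedge b)^\top = a^\top \cap b^\top$. This reduces to the distributive law $(a \wedge b) \vee x = (a \vee x) \wedge (b \vee x)$, valid in any distributive nearlattice whenever $a \wedge b$ exists: one works inside the bounded distributive lattice $[a \wedge b)$, writing $a \vee x = a \vee ((a \wedge b) \vee x)$ and $b \vee x = b \vee ((a \wedge b) \vee x)$ and applying ordinary distributivity there. Granted the identity, distributivity of $\Fi(\A)$ yields
\[
[a \wedge b) \veebar (a^\top \cap b^\top) = \bigl([a \wedge b) \veebar a^\top\bigr) \cap \bigl([a \wedge b) \veebar b^\top\bigr),
\]
where the two factors contain $[a) \veebar a^\top = A$ and $[b) \veebar b^\top = A$, respectively, so the meet is $A$.

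The main delicate point is the identity $(a \wedge b)^\top = a^\top \cap b^\top$, which rests on transporting the distributive law into the nearlattice setting; once that is in hand, the rest is a short calculation inside the distributive lattice $\Fi(\A)$.
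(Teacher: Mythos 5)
Your proposal is correct and follows essentially the same route as the paper: both arguments reduce the two closure conditions to a distributivity computation in the lattice $\Fi(\A)$, using $[a\vee b)=[a)\cap[b)$ together with monotonicity of $(\cdot)^\top$ for joins, and $(a\wedge b)^\top=a^\top\cap b^\top$ for existing meets. The only difference is cosmetic (you distribute $a^\top\veebar b^\top$ where the paper distributes $(a\vee b)^\top$, and you spell out the verification of $(a\wedge b)^\top=a^\top\cap b^\top$, which the paper uses without comment).
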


\begin{proof}
Let $a,b\in\C(\A)$. Thus $[a)\veebar a^\top=[b)\veebar b^\top=A$. Then,
\[
[a\vee b)\veebar (a\vee b)^\top= \left([a)\cap[b)\right)\veebar(a\vee b)^\top =\left([a)\veebar(a\vee b)^\top\right)\cap\left([b)\veebar(a\vee b)^\top\right).
\]
Since $a\leq a\vee b$, we have $a^\top\subseteq(a\vee b)^\top$. Thus $A=[a)\veebar a^\top\subseteq[a)\veebar(a\vee b)^\top$. Then $A=[a)\veebar(a\vee b)^\top$. Analogously, we have $A=[b)\veebar(a\vee b)^\top$. It follows $[a\vee b)\veebar (a\vee b)^\top=A$. Hence, $a\vee b\in\C(\A)$.

Assume now that $a\wedge b$ exists in $A$. Then,
\begin{align*}
[a\wedge b)\veebar(a\wedge b)^\top &= ([a)\veebar[b))\veebar\left(a^\top\cap b^\top\right)\\
 &= \left([a)\veebar [b)\veebar a^\top\right)\cap\left([a)\veebar[b)\veebar b^\top\right)
 = A.
\end{align*}
Hence $a\wedge b\in \C(\A)$. Therefore, $\langle\C(\A),\vee,1\rangle$ is a nearlattice subalgebra of $\A$.
\end{proof}

\begin{proposition}
Let $\A=\langle A,\vee,1\rangle$ be a distributive nearlattice. Then $\langle \C(\A),\vee,1\rangle$ is a semi-boolean algebra.
\end{proposition}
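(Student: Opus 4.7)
The plan is to fix $a\in\C(\A)$ and show that the principal upset $[a)_{\C(\A)}$ is a Boolean lattice. Since $\C(\A)$ is already a nearlattice subalgebra of $\A$ containing $1$, and $[a)_A$ is a bounded distributive lattice, it is routine to check that $[a)_{\C(\A)}$ is a bounded distributive sublattice of $[a)_A$ with bottom $a$ and top $1$: closure under $\vee$ is immediate, and closure under $\wedge$ follows because meets in $[a)_A$ always exist and $\C(\A)$ is closed under existing meets. The remaining content is to produce, for each $x\in[a)_{\C(\A)}$, a complement of $x$ inside $[a)_{\C(\A)}$.

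To build the candidate complement, I would use $x\in\C(\A)$: since $[x)\veebar x^\top=A$, the element $a$ lies in this join, so there exist $p\in[x)$ and $q\in x^\top$ with $p\wedge q$ existing and $p\wedge q\leq a$. Define $y:=q\vee a$. Then $y\in[a)$ and $x\vee y=(x\vee q)\vee a=1$. For $x\wedge y=a$, I would work inside the bounded distributive lattice $[p\wedge q)$, which contains $p,q,a,x$ because $p\wedge q\leq a\leq x\leq p$; distributivity there gives $x\wedge(q\vee a)=(x\wedge q)\vee a$, and $x\wedge q\leq p\wedge q\leq a$ closes the calculation.

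The main obstacle will be checking that this candidate $y$ actually belongs to $\C(\A)$. The key intermediate step is the filter-lattice identity
\[
[x\wedge y)=[x)\veebar[y),
\]
which holds in any distributive nearlattice when $x\wedge y$ exists: for $z\geq x\wedge y$, both $z\vee x$ and $z\vee y$ lie in $[x\wedge y)$, and distributivity there gives $(z\vee x)\wedge(z\vee y)=z\vee(x\wedge y)=z$, witnessing $z\in[x)\veebar[y)$. Applying this with $a=x\wedge y$, I obtain
\[
A=[a)\veebar a^\top=[x)\veebar[y)\veebar a^\top.
\]
Finally, $x\vee y=1$ gives $[x)\subseteq y^\top$, and $a\leq y$ gives $a^\top\subseteq y^\top$, so the right-hand side is contained in $y^\top\veebar[y)\veebar y^\top=[y)\veebar y^\top$. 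Hence $[y)\veebar y^\top=A$, i.e., $y\in\C(\A)$, and therefore $y$ is the desired complement of $x$ in $[a)_{\C(\A)}$.
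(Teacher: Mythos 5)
Your proof is correct and follows essentially the same route as the paper: given $a\le x$ with $x\in\C(\A)$, use $[x)\veebar x^\top=A$ to extract an element $y\ge a$ with $x\vee y=1$ and $x\wedge y=a$, then verify $y\in\C(\A)$ via the identity $[x\wedge y)=[x)\veebar[y)$. Your final step is in fact slightly cleaner than the paper's, which computes $a^\top=x^\top\cap y^\top$ and distributes, whereas you only need the inclusions $[x)\subseteq y^\top$ and $a^\top\subseteq y^\top$.
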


\begin{proof}
By the previous proposition we know that $\langle \C(\A),\vee,1\rangle$ is a distributive nearlattice. Thus, $[a)_{\C(\A)}$ is a bounded distributive lattice, for all $a\in\C(\A)$. Let $a\in\C(\A)$. Let us show that each element of the lattice $[a)_{\C(\A)}$ has a complement. Let $b\in[a)_{\C(\A)}$. So $a\leq b$ and $b\in\C(\A)$. Then $[b)\veebar b^\top=A$. Given that $a\in[b)\veebar b^\top$, there exist $x\in[b)$ and $y\in b^\top$ such that $a=x\wedge y$. Thus $a\leq y$ and $y\vee b=1$. Notice that $a\leq b,y$. Then $a\leq b\wedge y$. Since $b\leq x$, it follows that $b\wedge y\leq x\wedge y=a$. Hence $a=b\wedge y$. Now we show that $y\in\C(\A)$. Since $a=b\wedge y$, we obtain that $[a)=[b)\veebar[y)$ and $a^\top= b^\top\cap y^\top$. Moreover, since $b\in y^\top$, it follows that $[b)\subseteq y^\top$. Then, we have
\begin{align*}
A &= [a)\veebar a^\top = \left([b)\veebar[y)\right)\veebar\left(b^\top\cap y^\top\right)
=
\left([b)\veebar[y)\veebar b^\top\right)\cap\left([b)\veebar[y)\veebar y^\top\right)\\
&=
[y)\veebar y^\top.
\end{align*}
Hence, we obtain that $y\in[a)_{\C(\A)}$, $b\vee y=1$ and $b\wedge y=a$. That is, $y$ is the complement of $b$ in $[a)_{\C(\A)}$. Then $[a)_{\C(\A)}$ is a boolean algebra. Therefore, $\C(\A)$ is a semi-boolean algebra.
\end{proof}

\begin{proposition}[\cite{Ca19}]
Let $\A$ be a distributive nearlattice. Then, $\A$ is a semi-boolean algebra if and only if $A=\C(\A)$.
\end{proposition}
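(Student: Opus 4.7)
The result is an equivalence, and I would argue each direction using the natural constructions that the semi-boolean condition provides.

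For the $(\Rightarrow)$ direction, assume $\A$ is semi-boolean. Fix $a \in A$; I want to check $[a) \veebar a^\top = A$, so take an arbitrary $x \in A$ and aim to write $x$ as a meet of an element of $[a)$ with an element of $a^\top$. The key is to work inside the principal upset $[x)$, which is a Boolean lattice by hypothesis. The element $a \vee x$ lies in $[x)$ and thus admits a complement $c$ there, giving $c \wedge (a \vee x) = x$ and $c \vee (a \vee x) = 1$. Since $x \leq c$, we get $c \vee a = c \vee (a \vee x) = 1$, so $c \in a^\top$, while $a \vee x \in [a)$. The identity $x = c \wedge (a \vee x)$ then places $x$ inside $[a) \veebar a^\top$, using the standard description of the filter join in terms of existing meets of elements of $[a) \cup a^\top$.

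For the $(\Leftarrow)$ direction, assume $A = \C(\A)$ and fix $a \in A$; since $[a)$ is already known to be a bounded distributive lattice, it suffices to produce a complement for an arbitrary $b \in [a)$. Since $b \in \C(\A)$, we have $a \in A = [b) \veebar b^\top$, so there are $c \in [b)$ and $d \in b^\top$ with $c \wedge d$ existing and $c \wedge d \leq a$. My proposal is $b' := d \vee a \in [a)$. The join side is immediate: $b \vee b' = b \vee d = 1$, using $a \leq b$ and $d \vee b = 1$. For the meet side, $b$ and $b'$ share the lower bound $a$, so $b \wedge b'$ exists; I would then work inside the bounded distributive lattice $[c \wedge d)$, which contains $\{a, b, d, d \vee a\}$, and apply distributivity to compute $b \wedge (d \vee a) = (b \wedge d) \vee a$. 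The inequality $b \wedge d \leq c \wedge d \leq a$ (from $b \leq c$) finishes the calculation and gives $b \wedge b' = a$, so $b'$ is the complement of $b$ in $[a)$.

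The principal obstacle is the bookkeeping around meet existence in a nearlattice and the choice of an appropriate principal upset in which to invoke distributivity. In the $(\Leftarrow)$ argument, the precise point is to recognize $c \wedge d$ as a common lower bound that legitimises working inside $[c \wedge d)$ for the distributive identity $b \wedge (d \vee a) = (b \wedge d) \vee (b \wedge a)$. A secondary technical item is the standard description $F \veebar G = \{z : \exists\, f \in F,\, g \in G \text{ with } f \wedge g \text{ existing and } f \wedge g \leq z\}$, which I would use implicitly in both directions and which follows from filters being closed under existing meets.
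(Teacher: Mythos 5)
Your argument is correct and complete. Note that the paper does not actually prove this proposition: it is quoted from \cite{Ca19}, so there is no internal proof to compare against; your write-up serves as a self-contained verification. Both directions check out: in $(\Rightarrow)$ the complement $c$ of $a\vee x$ in the Boolean lattice $[x)$ does satisfy $c\vee a=c\vee(a\vee x)=1$ because $x\le c$, and $x=c\wedge(a\vee x)$ lands in $[a)\veebar a^\top$ simply because that filter contains $c$ and $a\vee x$ and is closed under existing meets (so here you do not even need the explicit description of the join of filters). In $(\Leftarrow)$ the element $b'=d\vee a$ works exactly as you say: $c\wedge d$ is a common lower bound of $a,b,d$, so the distributive identity $b\wedge(d\vee a)=(b\wedge d)\vee(b\wedge a)$ is legitimate inside $[c\wedge d)$, and $b\wedge d\le c\wedge d\le a$ closes the computation. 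The only ingredient you use that deserves explicit justification is the description $F\veebar G=\{z:\exists f\in F,\,g\in G \text{ with } f\wedge g \text{ existing and } f\wedge g\le z\}$, needed to extract the pair $(c,d)$ from $a\in[b)\veebar b^\top$; this follows from the standard description of generated filters in a nearlattice (group a finite existing meet of generators into its $F$-part and $G$-part, each of which exists and lies in the respective filter), and the paper itself relies on the same description without comment in the proofs of Propositions 3.4 and 3.6, so you are in good company.
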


\begin{proposition}
Let $\A$ be a finite distributive nearlattice. Then, $\A$ is a semi-boolean algebra if and only if $\B(\A)=\C(\A)$.
\end{proposition}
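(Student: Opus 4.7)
The plan is to use the preceding proposition (characterizing semi-boolean algebras as those distributive nearlattices with $A=\C(\A)$) on one side, and the map $\pi_A$ together with part (2) of Proposition \ref{prop: injective ()^top + b^top = a^top} on the other.

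For the forward direction, suppose $\A$ is a semi-boolean algebra. Then by definition every principal upset $[a)$ is a boolean lattice, so $\B(\A)=A$; and by the previous proposition also $\C(\A)=A$. Hence $\B(\A)=\C(\A)$ trivially.

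For the reverse direction, assume $\B(\A)=\C(\A)$. By the previous proposition it suffices to prove $A=\C(\A)$, i.e., that every $a\in A$ satisfies $[a)\veebar a^\top=A$. Given $a\in A$, consider its image $\pi_A(a)=\bigwedge X_a$, which lies in $\B(\A)$ by Proposition \ref{prop: meet of atoms are booleans}, and hence in $\C(\A)$ by hypothesis. So $[\pi_A(a))\veebar\pi_A(a)^\top=A$. Now apply the two key facts: $a\leq\pi_A(a)$ (Remark \ref{rem: basic properties of pi_A}), which gives $[\pi_A(a))\subseteq[a)$; and $a^\top=\pi_A(a)^\top$ (Proposition \ref{prop: injective ()^top + b^top = a^top}(2)). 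Combining,
\[
A=[\pi_A(a))\veebar\pi_A(a)^\top\subseteq[a)\veebar a^\top\subseteq A,
\]
so $a\in\C(\A)$.

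There is no real obstacle — the argument is a direct manipulation once one notices that $\pi_A(a)$ provides the bridge from an arbitrary $a$ to a boolean (and thus, by hypothesis, complemented) element with the same $(\cdot)^\top$. The only conceptual point is to recognize that the identity $a^\top=\pi_A(a)^\top$ is precisely what is needed to transfer complementedness from $\pi_A(a)$ down to $a$.
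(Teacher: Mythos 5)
Your argument is correct and is essentially identical to the paper's proof: the paper also reduces the reverse direction to showing $A=\C(\A)$ by passing from $a$ to $b=\bigwedge X_a=\pi_A(a)$, using $a\le b$ and $a^\top=b^\top$ to get $A=[b)\veebar b^\top\subseteq[a)\veebar a^\top$. No gaps.
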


\begin{proof}
Assume that $\A$ is a semi-boolean algebra. Then, $A=\B(\A)$ and $A=\C(\A)$. Hence $\B(\A)=\C(\A)$. Conversely, assume that $\B(\A)=\C(\A)$. Let $a\in A$. Then, by Proposition \ref{prop: injective ()^top + b^top = a^top}, $a^\top=b^\top$, where $b=\bigwedge X_a$. Thus, $b\in\B(\A)=\C(\A)$ and $a\leq b$. Then  $A=[b)\veebar b^\top\subseteq[a)\veebar b^\top=[a)\veebar a^\top$. Thus $a\in\C(\A)$. Hence $A=\C(\A)$. Therefore, $\A$ is a semi-boolean algebra.
\end{proof}

\section{Connection with the free distributive lattice extension}\label{sec: free extension}

\begin{definition}
Let $\A$ be a distributive nearlattice. A pair $\langle L(\A), e \rangle$, where $L(\A)$ is a bounded distributive lattice and $e \colon A \to L(\A)$ is an N-embedding, is said to be a \emph{free distributive lattice extension} of $\A$ if $e[A]$ is finitely meet-dense in $L(\A)$ and the following universal property holds: for every bounded distributive lattice $\M$ and every N-homomorphism $h \colon A \to M$, there exists a unique lattice homomorphism $\widehat{h} \colon L(\A) \to M$ such that $h= \widehat{h} \circ e$.
\end{definition}

\begin{lemma} \label{lemma1:free extension}
Let $\A$ be a distributive nearlattice and let $\langle L(\A), e \rangle$ be the free distributive lattice extension of $\A$. The following properties are satisfied:
\begin{enumerate}
\item $e \left[ [a) \right] = \left[ e(a) \right)$.
\item $\Fig_{L(\A)} \left( e \left[ a^{\top} \right] \right) = e(a)^{\top}$.
\end{enumerate}
\end{lemma}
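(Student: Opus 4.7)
For part (1), the plan is to prove the inclusion $[e(a))\subseteq e[[a)]$, since $e[[a)]\subseteq [e(a))$ is immediate from $e$ being order-preserving. Given $x\in L(\A)$ with $e(a)\leq x$, I will use finite meet-density of $e[A]$ in $L(\A)$ to write $x=e(b_1)\wedge\dots\wedge e(b_n)$ for some $b_1,\dots,b_n\in A$. Then $e(a)\leq e(b_i)$ for every $i$, and injectivity of the N-embedding $e$ yields $a\leq b_i$; thus all $b_i$ lie in the bounded distributive lattice $[a)$, so the meet $b=b_1\wedge\dots\wedge b_n$ exists in $A$ and belongs to $[a)$. Since N-embeddings preserve finite meets whenever they exist, $e(b)=e(b_1)\wedge\dots\wedge e(b_n)=x$, and $x\in e[[a)]$.

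For part (2), the inclusion $\Fig_{L(\A)}(e[a^\top])\subseteq e(a)^\top$ is immediate from the fact that $e(a)^\top$ is a filter of $L(\A)$ containing every $e(x)$ with $x\in a^\top$ (because $e(a)\vee e(x)=e(a\vee x)=e(1)=1$). For the converse, I would take $y\in e(a)^\top$, use finite meet-density to write $y=e(b_1)\wedge\dots\wedge e(b_n)$, and use distributivity in $L(\A)$ together with $e$ preserving joins to compute
\[
1=e(a)\vee y=\bigwedge_{i=1}^n\bigl(e(a)\vee e(b_i)\bigr)=\bigwedge_{i=1}^n e(a\vee b_i).
\]
A finite meet equals the top exactly when each factor does, so $e(a\vee b_i)=1=e(1)$ for every $i$, and the injectivity of $e$ gives $a\vee b_i=1$, i.e., $b_i\in a^\top$. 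Hence $y$ is a finite meet of elements of $e[a^\top]$, whence $y\in\Fig_{L(\A)}(e[a^\top])$.

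I do not foresee a genuine obstacle. The only essential ingredients are finite meet-density (built into the definition of the free distributive lattice extension), the fact that N-embeddings are injective and preserve existing finite meets and the top, and the elementary observation that in a bounded lattice $\bigwedge c_i=1$ forces each $c_i=1$. Both parts reduce a statement about arbitrary elements of $L(\A)$ to a statement about $e[A]$ by invoking finite meet-density at the outset.
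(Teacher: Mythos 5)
Your proposal is correct and follows essentially the same route as the paper's proof: both parts reduce to $e[A]$ via finite meet-density, then use that $e$ preserves joins and existing meets and is injective (hence order-reflecting) to pull the conclusion back to $\A$. No gaps.
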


\begin{proof}
$(1)$ It is clear that $e\left[[a)\right]\subseteq [e(a))$. Conversely, if $u \in \left[ e(a) \right)$, then $e(a) \leq u$. Since $e[A]$ is finitely meet-dense in $L(\A)$, there exist $x_{1}, \hdots, x_{n} \in A$ such that $u=e(x_{1}) \wedge \hdots \wedge e(x_{n})$. Then $e(a) \leq e(x_{i})$, for all $i \in \{1, \hdots, n\}$. Since $e$ is injective, we have $a \leq x_{i}$,  $\forall i \in \{1, \hdots, n\}$. Then, there exists $y:=x_{1} \wedge \hdots \wedge x_{n}\in[a)$ such that $e(y)=u$. Hence $u \in e \left[ [a) \right]$. Therefore, $e \left[ [a) \right] = \left[ e(a) \right)$.

$(2)$ We know that $e(a)^\top$ is a filter of $L(\A)$, and it is clear that $e\left[a^\top\right]\subseteq e(a)^\top$. Then, we have $\Fig_{L(\A)}\left(e\left[a^\top\right]\right)\subseteq e(a)^\top$. Let now $u\in e(a)^\top$. So $u\vee e(a)=1_{L(\A)}$. Let $a_1,\dots,a_n\in A$ be such that $u=e(a_1)\wedge\dots\wedge e(a_n)$. Thus $1_{L(\A)}=u\vee e(a)=(e(a_1)\vee e(a))\wedge\dots\wedge(e(a_n)\vee e(a))$. Then $1_{L(\A)}=e(a_i\vee a)$ for all $i$. Hence $a_i\vee a=1$ for all $i$. That is, $a_1,\dots,a_n\in a^\top$. Thus, $e(a_1),\dots,e(a_n)\in e\left[a^\top\right]$. Then $u=e(a_1)\wedge\dots\wedge e(a_n)\in\Fig_{L(\A)}\left(e\left[a^\top\right]\right)$. Hence, $\Fig_{L(\A)}\left(e\left[a^\top\right]\right)=e(a)^\top$.
\end{proof}

\begin{proposition}\label{prop: connection between elements of A y L_A}
Let $\A$ be a finite distributive nearlattice and $\langle L(\A),e\rangle$ its free distributive nearlattice extension. Then,
\begin{enumerate}[{\normalfont (1)}]
	\item $e\left[\At(\A)\right]=\At(L(\A))$;
	\item $e\left[\B(\A)\right]\subseteq\B(L(\A))$;
	\item $e\left[\C(\A)\right]\subseteq\comp(L(\A))$.
\end{enumerate}
\end{proposition}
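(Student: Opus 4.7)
The plan is to exploit the two identities in Lemma \ref{lemma1:free extension} together with the finite meet-density of $e[A]$ in $L(\A)$, treating the three items in order.

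For (1), I would prove both inclusions. For $e[\At(\A)] \subseteq \At(L(\A))$, take $a \in \At(\A)$ and suppose $e(a) \leq u \leq 1_{L(\A)}$; by Lemma \ref{lemma1:free extension}(1), $u \in [e(a)) = e\left[[a)\right]$, so $u = e(x)$ with $a \leq x$, and the dual-atom property of $a$ combined with injectivity of $e$ forces $u \in \{e(a), 1_{L(\A)}\}$. For the reverse inclusion, take $u \in \At(L(\A))$ and write $u = e(x_1) \wedge \cdots \wedge e(x_n)$ by meet-density; since $u$ is a dual atom and each $e(x_i) \geq u$, each $e(x_i)$ equals $u$ or $1_{L(\A)}$, and since not all can be $1_{L(\A)}$, some $e(x_j) = u$. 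The same lifting argument then shows $x_j$ itself is a dual atom of $\A$, whence $u \in e[\At(\A)]$.

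For (2), the key observation is that Lemma \ref{lemma1:free extension}(1) says $e$ restricts to a surjection $[a) \to [e(a))$; this restriction is injective and, because all binary meets exist in $[a)$, preserves both joins and meets, hence is a bounded lattice isomorphism. Therefore $[e(a))$ is Boolean iff $[a)$ is, and the conclusion $e[\B(\A)] \subseteq \B(L(\A))$ follows immediately.

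For (3), I first invoke Proposition \ref{prop: G(A) = C(A)} to reduce $\comp(L(\A))$ to $\C(L(\A))$, so it suffices to show $[e(a)) \veebar e(a)^\top = L(\A)$ in $\Fi(L(\A))$ whenever $a \in \C(\A)$. Starting from $[a) \veebar a^\top = A$, every $x \in A$ dominates a finite meet $y_1 \wedge \cdots \wedge y_k$ whose factors lie in $[a) \cup a^\top$; applying $e$ and using both parts of Lemma \ref{lemma1:free extension}, each $e(y_i)$ lies in $[e(a)) \cup e(a)^\top$, so $e(x)$ lies in the lattice filter $[e(a)) \veebar e(a)^\top$. A second application of meet-density, $u = e(x_1) \wedge \cdots \wedge e(x_n)$, then shows every $u \in L(\A)$ lies in this filter.

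The only delicate point I anticipate is in (1), where one must argue that \emph{some} $e(x_j)$ equals $u$ (not merely dominates it); this is a small combinatorial step relying on $u$ being a dual atom rather than on any finiteness of $\A$. Everything else is a routine push-forward through $e$ using the two descriptions provided by Lemma \ref{lemma1:free extension}.
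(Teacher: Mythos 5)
Your proposal is correct. Items (1) and (2) follow essentially the paper's own argument: in (1) the paper also writes elements of $[e(a))$ as finite meets $e(a_1)\wedge\dots\wedge e(a_n)$ and uses the dual-atom dichotomy (your use of Lemma \ref{lemma1:free extension}(1) just packages that meet-density computation), and in (2) the paper pushes the complement $\neg_a x$ forward through $e$, which is the concrete form of your observation that $e$ restricts to a bounded lattice isomorphism $[a)\to[e(a))$. Where you genuinely diverge is in (3): the paper invokes the lattice isomorphism $\Phi\colon\Fi(\A)\to\Fi(L(\A))$, $\Phi(F)=\Fig_{L(\A)}(e[F])$, from \cite[Thm.~3.3]{CeCa16} and transports the identity $[a)\veebar a^\top=A$ across it, whereas you argue directly: write each $x\in A=[a)\veebar a^\top$ as an existing meet with factors in $[a)\cup a^\top$ (the description of $\veebar$ the paper itself uses in Section \ref{sec:annihilators}), push forward through $e$, and finish with meet-density and closure of lattice filters under finite meets. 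Your route is more elementary and self-contained, at the cost of re-deriving by hand what the cited isomorphism gives for free; both are valid.
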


\begin{proof}
(1) Let $a\in\At(\A)$. Let $u\in L(\A)$ be such that $e(a)\leq u\leq 1_{L(\A)}$ (recall that $1_{L(\A)}=e(1)$). Thus, there are $a_1,\dots,a_n\in  A$ such that $u=e(a_1)\wedge\dots\wedge e(a_n)$. Since $e(a)\leq u\leq e(a_i)$ for all $i$, we obtain that $a\leq a_i\leq 1$ for all $i$. Then, for every $i$, $a=a_i$ or $a_i=1$. If $a_i=1$ for all $i$, then $u=1_{L(\A)}$. Otherwise, there is $i\in\{1,\dots,n\}$ such that $a_i=a$. Thus, $e(a)\leq u\leq e(a_i)=e(a)$. Then $u=e(a)$. Hence $e(a)\in\At(L(\A))$. Let now $u\in\At(L(\A))$. There are $a_1,\dots,a_n\in A$ such that $u=e(a_1)\wedge\dots\wedge e(a_n)$. Since $u\neq 1_{L(\A)}$, it follows that there is $i$, such that $a_i\neq 1$. Then, since $u\leq e(a_i)<1_{L(\A)}$, we have $u=e(a_i)$. Moreover, it is straightforward to show directly that $a_i\in\At(\A)$. Hence $u\in e\left[\At(\A)\right]$.

(2) Let $a\in\B(\A)$. It is clear that $\langle[e(a)),\wedge,\vee,e(a),1_{L(\A)}\rangle$ is a bounded distributive sublattice of $L(\A)$. We need only to prove that every element of $[e(a))$ is complemented. Recall that $[e(a))=e\left[[a)\right]$. Thus, let $e(x)\in[e(a))$ with $x\in[a)$. Since $[a)$ is a boolean lattice, there is $\neg_{a} x \in [a)$ such that $x \vee \neg_{a} x = 1$ and $x \wedge_{a} \neg_{a} x = a$. Then $1_{L(\A)} = e \left( x \vee \neg_{a} x \right) = e(x) \vee e \left( \neg_{a} x \right)$ and $e(a) = e \left( x \wedge_{a} \neg_{a} x \right) = e(x) \wedge e \left( \neg_{a} x \right)$. Thus, $e \left( \neg_{a} x \right)$ is the complement of $e(x)$ in $\left[ e(a) \right)$, i.e., $e \left( \neg_{a} x \right) = \neg_{e(a)} e(x)$. Therefore, $e(a)\in\B(L(\A))$.

(3) Let $a\in\C(\A)$. So $[a)\veebar a^\top=A$. We need to prove that $e(a)\in\comp(L(\A))$. From Proposition \ref{prop: G(A) = C(A)}, it is equivalent to show that $[e(a))\veebar e(a)^\top =L(\A)$ (here $\veebar$ is the join of $\Fi(L(\A))$). Since $[a)\veebar a^\top=A$, it follows that
\[
\Fig_{L(\A)}\left(e\left[[a)\veebar a^\top\right]\right)=\Fig_{L(\A)}\left(e[A]\right)=L(\A).
\]
By \cite[Thm. 3.3]{CeCa16}, we know that the map $\Phi\colon\Fi(\A)\to\Fi(L(\A))$ defined by $\Phi(F)=\Fig_{L(\A)}\left(e[F]\right)$ for every $F\in\Fi(\A)$, is a lattice isomorphism. Then,
\begin{equation*}
\begin{split}
\Fig_{L(\A)}\left(e\left[[a)\veebar a^\top\right]\right) &= \Phi\left([a)\veebar a^\top\right) 
=  \Phi\left([a)\right)\veebar\Phi\left(a^\top\right)\\
&= \Fig_{L(\A)}\left(e\left[[a)\right]\right)\veebar\Fig_{L(\A)}\left(e\left[a^\top\right]\right)\\
&= [e(a))\veebar \Fig_{L(\A)}\left(e\left[a^\top\right]\right).
\end{split}
\end{equation*}
Hence, by Lemma \ref{lemma1:free extension}, we have
\[
L(\A)=\Fig_{L(\A)}\left(e\left[[a)\veebar a^\top\right]\right) = [e(a))\veebar e(a)^\top.
\]
Therefore, we have shown that $e(a)\in\C(L(\A))$.
\end{proof}

\begin{remark} \label{remark:2 items}
If $\A$ is a finite distributive nearlattice, then $L(\A)$ is a finite distributive lattice. Hence, by Corolary \ref{coro: B(L) boolean lattice}, it follows that $\B\left( L(\A) \right)$ is a boolean lattice and for all $a\in \B(\A)$, $e \left( \neg_{a} x \right) = \neg e(x) \vee e(a)$ for all $x\in[a)$. 
\end{remark}

\begin{theorem}
Let $\A$ be a finite distributive nearlattice and  $\langle L(\A), e \rangle$ its free distributive lattice extension. Then $\pi_{L(\A)} \circ e = e \circ \pi_{A}$, i.e., the following diagram commutes:
\begin{displaymath}
\begin{tabular}{ccc}
\xymatrix{
A \ar[r]^-{\pi_{A}} \ar[d]_-{e} & \B({\bf{A}}) \ar[d]^-{e} \\
L({\bf{A}}) \ar[r]_-{\pi_{L({\bf{A}})}} & \B \left( L({\bf{A}}) \right)
}
\end{tabular}
\end{displaymath}
\end{theorem}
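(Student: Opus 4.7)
The plan is to unwind both sides of $\pi_{L(\A)}\circ e=e\circ\pi_{A}$ from the definition of $\pi$, and then match them by using the atom correspondence from Proposition \ref{prop: connection between elements of A y L_A}(1). Fix $a\in A$. By definition, $e(\pi_{A}(a))=e\bigl(\bigwedge X_{a}\bigr)$, while $\pi_{L(\A)}(e(a))=\bigwedge X_{e(a)}$, so everything reduces to proving the identity
\[
e\Bigl(\bigwedge X_{a}\Bigr)=\bigwedge X_{e(a)}.
\]

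The first key step is the set equality $X_{e(a)}=e[X_{a}]$. For the inclusion $e[X_{a}]\subseteq X_{e(a)}$, pick $x\in\At(\A)$ with $a\le x$; by Proposition \ref{prop: connection between elements of A y L_A}(1), $e(x)\in\At(L(\A))$, and $e(a)\le e(x)$ since $e$ preserves the order (it is an N-embedding, hence preserves $\vee$). For the reverse inclusion, take $v\in X_{e(a)}$; again by Proposition \ref{prop: connection between elements of A y L_A}(1) there is $x\in\At(\A)$ with $v=e(x)$, and $e(a)\le e(x)$ forces $a\le x$ because $e$ is an injective join-homomorphism and therefore order-reflecting. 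Thus $x\in X_{a}$ and $v=e(x)\in e[X_{a}]$.

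The second step is to transport the meet through $e$. Since $\A$ is finite, Proposition \ref{prop: meet of atoms are booleans} guarantees that $\bigwedge X_{a}$ exists in $A$ (it equals $\pi_{A}(a)\in\B(\A)$), and an N-homomorphism preserves any existing finite meet by iterating the defining clause $f(a\wedge b)=f(a)\wedge f(b)$. Hence
\[
e\Bigl(\bigwedge X_{a}\Bigr)=\bigwedge e[X_{a}]=\bigwedge X_{e(a)},
\]
which is the desired equality. The degenerate case $a=1$ is handled separately: both $X_{1}$ and $X_{e(1)}=X_{1_{L(\A)}}$ are empty, and their meets (in the respective top-bounded structures) are $1$ and $1_{L(\A)}=e(1)$, so commutativity of the diagram still holds.

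The only substantive point is the set identity $X_{e(a)}=e[X_{a}]$, and even that is essentially a direct translation of Proposition \ref{prop: connection between elements of A y L_A}(1) together with the fact that an N-embedding is an order-isomorphism onto its image. Once this is in hand, the preservation of existing finite meets by $e$ closes the argument, so I do not anticipate any real obstacle beyond bookkeeping.
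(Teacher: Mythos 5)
Your proof is correct and follows essentially the same route as the paper: both reduce the claim to the identity $e\bigl(\bigwedge X_a\bigr)=\bigwedge X_{e(a)}$ via the atom correspondence $e[\At(\A)]=\At(L(\A))$ and the fact that $e$ preserves existing finite meets. Your version merely spells out the order-reflection of $e$ and the empty-meet case, which the paper leaves implicit.
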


\begin{proof}
Let $a \in A$. By Proposition \ref{prop: connection between elements of A y L_A}, we obtain that 
\begin{align*}
\pi_{L(\A)}\left(e(a)\right) &= \bigwedge\{z\in\At(L(\A)): e(a)\leq z\}\\
&=
\bigwedge\{e(x): x\in\At(\A), \, e(a)\leq e(x)\}\\
&=
e\left[\bigwedge\{x\in\At(\A): a\leq x\}\right] = e\left(\pi_\A(a)\right).
\qedhere
\end{align*}
\end{proof}

\section{A discrete representation}\label{sec:representation}

In this section we develop a representation for the class of finite distributive nearlattices through certain ordered structures. This representation is a nice generalization of that given by Birkhoff for finite distributive lattice through finite posets (\cite{Bi37}).

\begin{definition}
Let $\A$ be a distributive nearlattice. An element $a\in A$ is said to be \emph{meet-irreducible} (or simply \emph{irreducible}) if for all $x,y\in A$ such that $x\wedge y$ exists in $A$, $a=x\wedge y$ implies $a=x$ or $a=y$. 
\end{definition}

Let us denote by $\Irr(\A)$ the set of all irreducible elements of $\A$. Notice that $\At(\A)\subseteq\Irr(\A)$.

\begin{proposition}\label{prop: irreducible = prime}
Let $\A$ be a distributive nearlattice and $a\in A$. Then, $a$ is irreducible if and only if for all $x,y\in A$ such that $x\wedge y$ exists, $x\wedge y\leq a$ implies $x\leq a$ or $y\leq a$.
\end{proposition}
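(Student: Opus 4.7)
The plan is to prove the two directions separately, with the forward direction being the substantive one and the backward direction essentially immediate.

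For the forward direction, I would assume $a$ is irreducible and take $x, y \in A$ such that $x \wedge y$ exists and $x \wedge y \leq a$. The goal is to produce a concrete meet-decomposition of $a$ to which I can apply the irreducibility hypothesis. The natural candidate is $(x \vee a) \wedge (y \vee a)$: this meet exists in $A$ because both factors lie above $a$, and it equals $a$. To see the equality, I would work inside the principal filter $[x \wedge y)$. Since $x \wedge y \leq x, y, a$, all three of $x, y, a$ lie in $[x \wedge y)$, which by definition of a distributive nearlattice is a bounded distributive lattice. In that lattice the distributive identity $(x \vee a) \wedge (y \vee a) = (x \wedge y) \vee a$ holds, and because $x \wedge y \leq a$ the right-hand side reduces to $a$. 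Thus $a = (x \vee a) \wedge (y \vee a)$ with the meet existing in $A$, so by irreducibility either $a = x \vee a$ or $a = y \vee a$, giving $x \leq a$ or $y \leq a$.

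For the backward direction, I would suppose the prime-like condition holds and let $a = x \wedge y$ with the meet existing. Then $x \wedge y \leq a$ trivially, so the hypothesis yields $x \leq a$ or $y \leq a$. Combined with $a = x \wedge y \leq x$ and $a \leq y$, we conclude $a = x$ or $a = y$, which is exactly irreducibility.

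The only subtle point is the forward direction, where one must be careful that the distributive identity is applied in a lattice that actually contains all the elements involved. The choice of the filter $[x \wedge y)$ (rather than $[a)$, which need not contain $x$ or $y$) is what makes the argument go through cleanly, and the compatibility of the meet in $[x \wedge y)$ with the meet in $A$ is exactly the fact recalled in the preliminaries that whenever two elements have a common lower bound, their meet in any principal filter above that bound coincides with their meet in $A$.
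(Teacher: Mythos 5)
Your proof is correct; the paper in fact states this proposition without proof, and your argument is exactly the standard one the authors presumably had in mind: the backward direction is immediate, and in the forward direction the decomposition $a=(x\vee a)\wedge(y\vee a)$, justified by applying distributivity inside the lattice $[x\wedge y)$ (which contains $x$, $y$, and $a$) and by the compatibility of meets in principal filters with meets in $A$, is precisely the right move. No gaps.
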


\begin{theorem}
Let $\A$ be a finite distributive nearlattice. Then, for every $a\in A$,
\[
a=\bigwedge\{x\in\Irr(\A): a\leq x\}.
\]
\end{theorem}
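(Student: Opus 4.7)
The plan is to set $S := \{x \in \Irr(\A) : a \leq x\}$, note that $S \subseteq [a)$ is finite and hence admits a meet in the bounded distributive lattice $[a)$ (which, since $a$ is a common lower bound, coincides with the meet in $\A$). Write $m := \bigwedge S$; clearly $a \leq m$ and the goal is to show $m \leq a$.

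I would argue by contradiction, assuming $a < m$ (equivalently $m \not\leq a$). Consider the set
\[
T := \{y \in [a) : m \not\leq y\}.
\]
Then $a \in T$, so $T$ is non-empty, and by finiteness of $\A$ we may pick a maximal element $x$ of $T$ with respect to the order.

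The key step is the claim $x \in \Irr(\A)$. Suppose $x = y_1 \wedge y_2$ with the meet existing in $\A$ and $y_i \neq x$ for $i = 1,2$. Then $x < y_1$ and $x < y_2$, and since $a \leq x \leq y_i$ we have $y_i \in [a)$. Maximality of $x$ in $T$ then forces $m \leq y_1$ and $m \leq y_2$, whence $m \leq y_1 \wedge y_2 = x$, contradicting $x \in T$. Hence $x$ is irreducible, so $x \in S$, giving $m \leq x$, which again contradicts $x \in T$.

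The main obstacle is the irreducibility claim for the maximal element $x$, but once one spots the right set $T$ and uses that any strict upper bound of $x$ in $[a)$ already dominates $m$, the rest is bookkeeping. No appeal to the prime-filter theorem is required in the finite setting; finiteness alone supplies the maximal element, and Proposition \ref{prop: irreducible = prime} is not needed here (though it would give an alternative, slightly longer route via prime ideals).
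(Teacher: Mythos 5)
Your proof is correct, but it takes a genuinely different route from the paper's. The paper argues globally: it considers the set of all counterexamples $\{c\in A: c\neq\bigwedge\{x\in\Irr(\A): c\leq x\}\}$, takes a maximal element $m$ of that set, observes that such an $m$ cannot be irreducible and so factors as $m=b_1\wedge b_2$ with $m<b_1,b_2$, and then invokes Proposition \ref{prop: irreducible = prime} (irreducibles are ``prime'' for $\wedge$) to see that every irreducible above $m$ lies above $b_1$ or above $b_2$, so that the representations of $b_1$ and $b_2$ (valid by maximality of $m$) meet to give one for $m$. You instead fix $a$, set $m=\bigwedge\{x\in\Irr(\A):a\leq x\}$, and run the classical separation-by-meet-irreducibles argument: a maximal element of $T=\{y\in[a): m\nleq y\}$ is forced to be irreducible, which contradicts the definition of $m$. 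Both arguments are sound; yours is self-contained and, as you note, bypasses Proposition \ref{prop: irreducible = prime} entirely, while the paper's version packages the same finiteness induction through that primeness lemma. Two small points worth making explicit in your write-up: the set $S=\{x\in\Irr(\A):a\leq x\}$ is non-empty (under the paper's literal definition $1\in\Irr(\A)$; alternatively every $a<1$ lies below a dual atom, and dual atoms are irreducible), so $\bigwedge S$ is an honest finite meet computed inside the bounded lattice $[a)$; and the implication ``$m\leq y_1$, $m\leq y_2$ implies $m\leq y_1\wedge y_2$'' is legitimate because $y_1\wedge y_2$, when it exists in $A$, is the greatest common lower bound. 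With those remarks your argument is complete.
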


\begin{proof}
Let $S=\{a\in A: a\neq \bigwedge\{x\in\Irr(\A): a\leq x\}\}$. We suppose by contradiction that $S\neq\emptyset$. Then, since $A$ is finite, $S$ has a maximal element $m$. Thus $m\notin\Irr(\A)$. Then, there exist $a,b\in A$ such that $m=a\wedge b$, $m<a$ and $m<b$. Since $m$ is a maximal element of $S$, it follows that $a,b\notin S$. Thus $a=\bigwedge\{x\in\Irr(\A): a\leq x\}$ and $b=\bigwedge\{\Irr(\A): b\leq x\}$. Hence, by Proposition \ref{prop: irreducible = prime}, we have
\begin{align*}
m &= \left(\bigwedge\{x\in\Irr(\A): a\leq x\}\right)\wedge\left(\bigwedge\{x\in\Irr(\A): b\leq x\}\right)\\
&=
\bigwedge\{x\in\Irr(\A): m\leq x\}.
\end{align*}
Thus, we obtain that $m\notin S$, which is a contradiction. Therefore, $S=\emptyset$.
\end{proof}

\begin{proposition}\label{prop: Irr(A)=Irr(L_A)}
Let $\A$ be a finite distributive nearlattice and $\langle L(\A),e\rangle$ its free distributive lattice extension. Then, $e\left[\Irr(\A)\right]=\Irr(L(\A))$.
\end{proposition}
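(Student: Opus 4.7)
My plan is to prove both inclusions using the two key structural features of $\langle L(\A),e\rangle$: meet-density of $e[A]$ in $L(\A)$, and the fact that $e$ is an N-embedding (so it preserves and reflects existing meets).

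For the inclusion $e[\Irr(\A)] \subseteq \Irr(L(\A))$, I take $a\in\Irr(\A)$ and suppose $e(a)=u\wedge v$ for some $u,v\in L(\A)$. By meet-density I write $u = e(x_1)\wedge\cdots\wedge e(x_m)$ and $v = e(y_1)\wedge\cdots\wedge e(y_n)$. Since $e(a)\le e(x_i)$ and $e(a)\le e(y_j)$, the injectivity of $e$ gives $a\le x_i$ and $a\le y_j$; equivalently, all $x_i,y_j$ lie in the bounded distributive lattice $[a)$. Hence the meets $x := x_1\wedge\cdots\wedge x_m$ and $y := y_1\wedge\cdots\wedge y_n$ exist in $\A$, and applying the N-homomorphism property of $e$ yields $u=e(x)$, $v=e(y)$ and $e(a)=e(x\wedge y)$. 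Injectivity then gives $a=x\wedge y$ in $\A$, and irreducibility of $a$ forces $a=x$ or $a=y$, i.e.\ $e(a)=u$ or $e(a)=v$.

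For the reverse inclusion $\Irr(L(\A)) \subseteq e[\Irr(\A)]$, I take $u\in\Irr(L(\A))$ and use meet-density to write $u=e(x_1)\wedge\cdots\wedge e(x_n)$. Irreducibility of $u$ in the finite distributive lattice $L(\A)$, applied inductively, yields $u=e(x_i)$ for some $i$; set $a:=x_i$. To check $a\in\Irr(\A)$, suppose $a=b\wedge c$ in $\A$ (the meet existing). Then $e(a)=e(b)\wedge e(c)$ in $L(\A)$, and since $e(a)=u$ is irreducible, $e(a)=e(b)$ or $e(a)=e(c)$; injectivity of $e$ gives $a=b$ or $a=c$.

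The only non-routine point is the forward direction, where one must make sure the factors of $u$ and $v$ given by meet-density can be consolidated into single preimages in $\A$; this is exactly what Lemma \ref{lemma1:free extension}(1) secures, via the equality $e[[a)]=[e(a))$, since it places all the $x_i$'s and $y_j$'s inside the distributive lattice $[a)$ so that their meet exists in $\A$ and is preserved by $e$.
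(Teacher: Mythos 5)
Your proof is correct and follows essentially the same route as the paper's: both inclusions are obtained by writing elements of $L(\A)$ as finite meets of elements of $e[A]$ via meet-density and pulling the factorization back through the injective N-embedding $e$, so that irreducibility transfers in both directions. The only difference is that you spell out the reverse inclusion, which the paper dismisses as straightforward.
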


\begin{proof}
Let $x\in\Irr(\A)$. Let $u,v\in L(\A)$ be such that $e(x)=u\wedge v$. There are $a_1,\dots,a_n,b_1,\dots,b_m\in A$ such that $u=e(a_1)\wedge\dots\wedge e(a_n)$ and $v=e(b_1)\wedge\dots\wedge e(b_m)$. Thus, we have that $e(x)\leq e(a_i),e(b_j)$, for all $i=1,\dots,n$ and $j=1,\dots,m$. Then $x\leq a_i,b_j$, for all $i,j$, which implies that there exists $a_1\wedge\dots\wedge a_n\wedge b_1\wedge\dots\wedge b_m$. Then $e(x)=e(a_1\wedge\dots\wedge a_n\wedge b_1\wedge\dots\wedge b_m)$. So $x=a_1\wedge\dots\wedge a_n\wedge b_1\wedge\dots\wedge b_m$. Since $x$ is irreducible, it follows that $x=a_1\wedge\dots\wedge a_n$ or $x=b_1\wedge\dots\wedge b_m$. Then $e(x)=u$ or $e(x)=v$. Hence $e(x)\in\Irr(L(\A))$. Therefore $e\left[\Irr(\A)\right]\subseteq\Irr(L(\A))$. Now it is straightforward to show the inclusion $\Irr(L(\A))\subseteq e\left[\Irr(\A)\right]$. 
\end{proof}

From now on, given a poset $\langle X,\leq\rangle$, $\Down(X)$ will denote the collection of all downsets of $X$ and let us consider the bounded distributive lattice $\langle\Down(X),\cap,\cup,\emptyset,X\rangle$.

Let $\A$ be a finite distributive nearlattice and $\langle L(\A),e\rangle$ its free distributive lattice extension. Let us consider $\Irr(\A)$ ($\Irr(L(\A))$) as a sub-poset of $A$ ($L(\A)$), that is, $x\leq y$ if and only if $x\vee y=y$, for all $x,y\in\Irr(\A)$. Since $L(\A)$ is a finite distributive lattice, it follows that the map $\alpha\colon L(\A)\to \Down(\Irr(L(\A)))$ defined by $\alpha(u)=\{z\in\Irr(L(\A)): u\nleq z\}$ is an isomorphism. Hence, by Proposition \ref{prop: Irr(A)=Irr(L_A)}, we have that the map $\widehat{\alpha}\colon L(\A)\to\Down(\Irr(\A))$ given by $\widehat{\alpha}(u)=\{x\in\Irr(\A): u\nleq e(x)\}$ is an isomorphism. Therefore, we have the following.

\begin{proposition}\label{prop: free extension of A}
For every finite distributive nearlattice $\A$, we have that $\langle\Down(\Irr(\A)),\widehat{e}\rangle$ is the free distributive lattice extension of $\A$, where $\widehat{e}\colon A\to\Down(\Irr(\A))$ is given by $\widehat{e}(a)=(\widehat{\alpha}\circ e)(a)=\{x\in\Irr(\A): a\nleq x\}$.
\end{proposition}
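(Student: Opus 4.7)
The plan is to transport the free distributive lattice extension structure from $\langle L(\A),e\rangle$ to $\langle\Down(\Irr(\A)),\widehat{e}\rangle$ along the lattice isomorphism $\widehat{\alpha}\colon L(\A)\to\Down(\Irr(\A))$ constructed just before the statement. Everything we need about $L(\A)$ is already assumed, so the work reduces to verifying that the two defining conditions of a free distributive lattice extension survive an isomorphic relabeling of the ambient lattice.

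First I would check that $\widehat{e}=\widehat{\alpha}\circ e$ is an $N$-embedding. Since $e$ is an $N$-embedding by the definition of $L(\A)$ and $\widehat{\alpha}$ is a bounded lattice isomorphism (hence injective and preserves all joins, existing meets and the top), the composition is injective and commutes with $\vee$, with $1$, and with any existing binary meet. Next I would establish finite meet-density of $\widehat{e}[A]$ in $\Down(\Irr(\A))$: given $D\in\Down(\Irr(\A))$, write $D=\widehat{\alpha}(u)$ for a unique $u\in L(\A)$; use finite meet-density of $e[A]$ in $L(\A)$ to write $u=e(a_1)\wedge\dots\wedge e(a_n)$; then applying $\widehat{\alpha}$ gives $D=\widehat{e}(a_1)\cap\dots\cap\widehat{e}(a_n)$, as desired.

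For the universal property, let $\M$ be a bounded distributive lattice and $h\colon A\to M$ an $N$-homomorphism. By the universal property of $\langle L(\A),e\rangle$, there is a unique lattice homomorphism $\widetilde{h}\colon L(\A)\to M$ with $h=\widetilde{h}\circ e$. Define $\widehat{h}:=\widetilde{h}\circ\widehat{\alpha}^{-1}\colon\Down(\Irr(\A))\to M$, which is a lattice homomorphism as a composition of such. A direct check gives
\begin{equation*}
\widehat{h}\circ\widehat{e} \;=\; \widetilde{h}\circ\widehat{\alpha}^{-1}\circ\widehat{\alpha}\circ e \;=\; \widetilde{h}\circ e \;=\; h.
\end{equation*}
For uniqueness, suppose $g\colon\Down(\Irr(\A))\to M$ is another lattice homomorphism with $g\circ\widehat{e}=h$. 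Then $g\circ\widehat{\alpha}$ is a lattice homomorphism $L(\A)\to M$ satisfying $(g\circ\widehat{\alpha})\circ e=g\circ\widehat{e}=h$, so by uniqueness in the $L(\A)$ case $g\circ\widehat{\alpha}=\widetilde{h}$, whence $g=\widetilde{h}\circ\widehat{\alpha}^{-1}=\widehat{h}$.

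I do not expect a real obstacle here; the argument is essentially a diagram chase exploiting that $\widehat{\alpha}$ is an isomorphism. The only point that deserves a careful sentence is that finite meet-density is preserved by $\widehat{\alpha}$, which relies on $\widehat{\alpha}$ preserving existing finite meets (true, since it is a bounded lattice isomorphism), so that a meet expression in $L(\A)$ transports verbatim to one in $\Down(\Irr(\A))$.
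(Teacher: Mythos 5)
Your proof is correct and follows the same route as the paper: the paper gives no separate argument at all, simply noting that $\widehat{\alpha}$ is an isomorphism and concluding ``Therefore, we have the following,'' which is exactly the transport-along-an-isomorphism argument you spell out. Your write-up just makes explicit the routine verifications (preservation of meet-density and the universal property under $\widehat{\alpha}$) that the paper leaves implicit.
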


\begin{definition}
A \emph{DN-structure} is a pair $\langle X,\gamma\rangle$ such that $X$ is a poset and $\gamma\colon \Down(X)\to\{0,1\}$ is a map satisfying the following:
\begin{enumerate}[(S1)]
	\item $\gamma(X)=1$;
	\item $\gamma([x)^c)=1$, for all $x\in X$;
	\item for all $U,V\in\Down(X)$, $U\subseteq V$ implies $\gamma(U)\leq\gamma(V)$.
\end{enumerate}
\end{definition}

We say that a DN-structure $\langle X,\gamma\rangle$ is \emph{finite} if the poset $X$ is finite. Let $\langle X,\gamma\rangle$ be a DN-structure. We define
\[
\N(X):=\{U\in\Down(X): \gamma(U)=1\}.
\]

\begin{proposition}
Let $\langle X,\gamma\rangle$ be a finite DN-structure. Then $\langle\N(X),\cup,X\rangle$ is a distributive nearlattice and $\langle X,\leq\rangle\cong\langle \Irr(\N(X)),\subseteq\rangle$.
\end{proposition}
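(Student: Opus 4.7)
The plan is to deduce the nearlattice structure of $\N(X)$ from that of $\Down(X)$ via the axioms (S1)--(S3), and then construct an explicit order isomorphism $\phi\colon X\to \Irr(\N(X))$ given by $\phi(x):=[x)^c$.

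For the first assertion, I would begin by observing that $X\in\N(X)$ by (S1), so $\N(X)$ has a top element. Closure under union follows from (S3): if $U,V\in\N(X)$, then $U\subseteq U\cup V$ forces $\gamma(U\cup V)=1$. The key observation is that whenever $U,V\in\N(X)$ share a lower bound $W\in\N(X)$, one has $W\subseteq U\cap V$, so (S3) again yields $U\cap V\in\N(X)$. Therefore meets in $\N(X)$ exist exactly when there is a common lower bound in $\N(X)$, and in that case they coincide with the set-theoretic intersection inherited from $\Down(X)$. Consequently, for each $U\in\N(X)$, the principal upset $[U)_{\N(X)}$ is a sub-bounded-lattice of $\Down(X)$ (with bottom $U$ and top $X$), hence a bounded distributive lattice. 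This establishes that $\langle\N(X),\cup,X\rangle$ is a distributive nearlattice.

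For the isomorphism, I would define $\phi\colon X\to\N(X)$ by $\phi(x):=[x)^c$; axiom (S2) ensures $\phi(x)\in\N(X)$. Next I would verify $\phi(x)\in\Irr(\N(X))$: if $[x)^c=V\wedge W$ in $\N(X)$ with the meet existing, then $x\in[x)$ implies $x\notin V\cap W$, so without loss of generality $x\notin V$; since $V$ is a downset, no $y\geq x$ can lie in $V$, giving $V\cap[x)=\emptyset$, i.e.\ $V\subseteq[x)^c$, and combined with $[x)^c\subseteq V$ this forces $V=[x)^c$. The map $\phi$ is an order-embedding by the standard chain of equivalences $x\leq y \iff [y)\subseteq[x) \iff [x)^c\subseteq[y)^c$, and injectivity is immediate because $[x)=[y)$ implies $x=y$ (each principal upset has a unique minimum).

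The surjectivity is the main technical step. Given $U\in\Irr(\N(X))$ (necessarily $U\neq X$, consistent with the tacit convention, used already in the Birkhoff-type isomorphism $\alpha$ above, that the top is excluded from $\Irr$), I would enumerate the minimal elements $x_1,\dots,x_n$ of the non-empty upset $X\setminus U$. Then $X\setminus U=[x_1)\cup\cdots\cup[x_n)$, so $U=[x_1)^c\cap\cdots\cap[x_n)^c$; since $U$ itself is a common lower bound in $\N(X)$ for these sets, the intersection equals the meet in $\N(X)$. Iterating Proposition \ref{prop: irreducible = prime}, irreducibility of $U$ forces $U=[x_i)^c=\phi(x_i)$ for some $i$. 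The main obstacle I anticipate lies precisely here: one must be sure that meets in $\N(X)$ coincide with intersections whenever a common lower bound exists, which is exactly what the first part of the argument, leaning on (S3), delivers.
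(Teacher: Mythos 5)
Your proof is correct and follows essentially the same route as the paper: closure of $\N(X)$ under $\cup$ and under $\cap$ above a common lower bound via (S3), the map $x\mapsto[x)^c$ landing in $\Irr(\N(X))$ via (S2), and surjectivity by writing the complement of an irreducible $U$ as a finite union of principal upsets. The only (harmless) difference is that you verify explicitly the irreducibility and order-embedding properties of $x\mapsto[x)^c$ where the paper invokes the well-known description of $\Irr(\Down(X))$, and you correctly flag the tacit convention that the top element is excluded from $\Irr$.
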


\begin{proof}
First let us show that $\N(X)$ is closed under $\cup$. Let $U_1,U_2\in\N(X)$. Thus $\gamma(U_1)=\gamma(U_2)=1$. Then, since $U_1\subseteq U_1\cup U_2$, it follows by (S3) that $\gamma(U_1\cup U_2)=1$. Hence $U_1\sqcup U_2\in\N(X)$

Now let $U_1,U_2,V\in\N(X)$ be such that $V\subseteq U_1,U_2$. Thus $V\subseteq U_1\cap U_2$. By (S3), we have $U_1\cap U_2\in\N(X)$. Then, $U_1\cap U_2$ is the meet of $U_1$ and $U_2$ in $[V)_{\N(X)}$. Hence $\langle[V)_{\N(X)},\cap,\cup,U,X\rangle$ is a bounded distributive lattice. Therefore, $\langle\N(X),\cup,X\rangle$ is a distributive nearlattice.

It is well-known that $\Irr(\Down(X))=\{[x)^c: x\in X\}$, and thus $\langle\Irr(\Down(X)),\subseteq\rangle\cong\langle X,\leq\rangle$. Let us prove that $\Irr(\N(X))=\Irr(\Down(X))$. By (S2), we have $\Irr(\Down(X))\subseteq\Irr(\N(X))$. Let now $U\in\Irr(\N(X))$. Since $U^c$ is a finite upset of $X$, we have that $U^c=[x_1)\cup\dots\cup[x_n)$, for some $x_1,\dots,x_n\in U^c$. Thus $U=[x_1)^c\cap\dots\cap[x_n)^c$. Then, since $U\in\Irr(\N(X)$, we obtain that $U=[x_i)^c$, for some $i=1,\dots,n$. Hence $\Irr(\N(X))\subseteq\Irr(\Down(X))$. Therefore, we have that $\langle \Irr(\N(X)),\subseteq\rangle\cong\langle X,\leq\rangle$.
\end{proof}

Let $\langle A,\vee,1\rangle$ be a finite distributive nearlattice. Let $\S(A)=\langle\Irr(\A),\leq\rangle$. 

\begin{proposition}
Let $\langle A,\vee,1\rangle$ be a finite distributive nearlattice. Then, the pair $\langle\S(A),\gamma_A\rangle$ is a DN-structure, where $\gamma_A\colon\Down(\S(A))\to\{0,1\}$ is the map defined by $\gamma_A(U)=1$ if and only if  $\bigwedge (\S(A)\setminus U)$ exists in $A$.
\end{proposition}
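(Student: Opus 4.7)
The plan is to verify the three axioms (S1), (S2), (S3) of a DN-structure directly from the definition of $\gamma_A$, using two elementary facts about $\A$: first, that the empty meet in a join-semilattice with top equals $1$, and second, that in a distributive nearlattice a finite subset has a meet exactly when it has a common lower bound (since pairwise meets exist whenever a common lower bound exists, one builds the meet by induction on the size of the set).

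For (S1), I would note that $\S(A)\setminus\S(A)=\emptyset$, and since $\A$ has a top $1$, the empty meet is $1\in A$; hence $\gamma_A(\S(A))=1$. For (S2), fixing $x\in\Irr(\A)$, I would observe that $\S(A)\setminus[x)^c=[x)\cap\Irr(\A)$, which is a subset of $[x)$ and contains $x$ itself, so $x$ is a common lower bound; being a finite subset with a common lower bound, its meet exists in $A$ (in fact it equals $x$). Therefore $\gamma_A([x)^c)=1$.

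For the monotonicity axiom (S3), I would assume $U\subseteq V$ are downsets of $\S(A)$ with $\gamma_A(U)=1$, so that $a:=\bigwedge(\S(A)\setminus U)$ exists in $A$. Since $U\subseteq V$ gives $\S(A)\setminus V\subseteq\S(A)\setminus U$, the element $a$ is a common lower bound for $\S(A)\setminus V$. The set $\S(A)\setminus V$ is finite (as $\A$ is finite), so by the fact above its meet exists in $A$, yielding $\gamma_A(V)=1$ and hence $\gamma_A(U)\leq\gamma_A(V)$.

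There is no serious obstacle here; the only subtlety is justifying that meets of arbitrary finite subsets with a common lower bound exist in a distributive nearlattice, which follows from iterating the property (stated after the definition of distributive nearlattice) that $x\wedge y$ exists iff $x,y$ have a common lower bound in $\A$. Everything else is bookkeeping of complements of downsets in $\Irr(\A)$.
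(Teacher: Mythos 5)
Your proof is correct and follows exactly the route the paper intends: the paper simply asserts that (S1)--(S3) are "straightforward to show directly," and your verification (empty meet equals $1$; $\S(A)\setminus[x)^c=[x)\cap\Irr(\A)$ has $x$ as a least element; a finite subset with a common lower bound has a meet, obtained by iterating the pairwise case) is precisely that direct check, carried out with the right attention to the empty-set and common-lower-bound subtleties.
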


\begin{proof}
It is straightforward to show directly that the map $\gamma_A$ satisfies conditions (S1)--(S3).
\end{proof}

Given a finite distributive nearlattice $\A$, we have that $\langle\N(\S(A)),\cup,\S(A)\rangle$ is a finite distributive nearlattice, where $\N(\S(A))=\{U\in\Down(\S(A)): \gamma_A(U)=1\}$.

\begin{theorem}[Discrete representation]
Let $\A=\langle A,\vee,1\rangle$ be a finite distributive nearlattice. Then, $\A\cong\N(\S(A))$. 
\end{theorem}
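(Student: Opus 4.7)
The plan is to show that the natural Birkhoff-style map $\varphi\colon A \to \N(\S(A))$ defined by $\varphi(a) = \{x \in \Irr(\A) : a \nleq x\}$ is an N-isomorphism, with the irreducible decomposition theorem proved earlier (every $a \in A$ equals $\bigwedge\{x\in\Irr(\A) : a\leq x\}$) as the essential tool throughout.

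First I would verify that $\varphi$ lands in $\N(\S(A))$. That $\varphi(a)$ is a downset of $\Irr(\A)$ follows by transitivity, and to see $\gamma_A(\varphi(a)) = 1$ it suffices to observe that $\S(A) \setminus \varphi(a) = \{x \in \Irr(\A) : a \leq x\}$, whose meet exists and equals $a$. Injectivity is then a direct consequence of this same identity: $\varphi(a) = \varphi(b)$ forces equality of the complementary sets and hence $a = b$.

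For surjectivity, given $U \in \N(\S(A))$, I would set $a := \bigwedge(\S(A)\setminus U)$, which exists by the defining condition of $\N(\S(A))$, and claim $\varphi(a) = U$. The inclusion $\varphi(a) \subseteq U$ is immediate: if $x \notin U$ then $x \in \S(A)\setminus U$ so $a \leq x$. The reverse inclusion is the step I expect to be the main obstacle. I would argue by contradiction: suppose $x \in U$ but $a \leq x$; writing $\S(A)\setminus U = \{y_1,\dots,y_n\}$ (finite since $\A$ is finite), we have $y_1 \wedge \cdots \wedge y_n \leq x$, and an iterated application of the meet-prime characterization of irreducibles (Proposition \ref{prop: irreducible = prime}) yields some $y_i \leq x$. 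Since $U$ is a downset and $x \in U$, this forces $y_i \in U$, contradicting $y_i \in \S(A)\setminus U$.

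Finally, for preservation of the operations: $\varphi(a \vee b) = \varphi(a) \cup \varphi(b)$ is direct from the definition, and $\varphi(1) = \S(A)$ (with the standard convention that $1 \notin \Irr(\A)$). When $a \wedge b$ exists in $\A$, the inclusion $\varphi(a \wedge b) \subseteq \varphi(a) \cap \varphi(b)$ follows from monotonicity, and the reverse inclusion is precisely the contrapositive of Proposition \ref{prop: irreducible = prime}. Combined with the bijectivity established above, this yields the desired N-isomorphism $\A \cong \N(\S(A))$.
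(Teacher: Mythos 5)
Your proof is correct, but it takes a more self-contained route than the paper. You work directly with the map $\varphi(a)=\{x\in\Irr(\A):a\nleq x\}$ and verify everything from scratch using only two earlier facts: the decomposition $a=\bigwedge\{x\in\Irr(\A):a\leq x\}$ and the meet-prime characterization of irreducibles (Proposition \ref{prop: irreducible = prime}). The paper instead obtains the same map as $\widehat{e}=\widehat{\alpha}\circ e$, where $e$ is the embedding into the free distributive lattice extension $L(\A)$ and $\widehat{\alpha}$ comes from the classical Birkhoff isomorphism $L(\A)\cong\Down(\Irr(L(\A)))$ together with Proposition \ref{prop: Irr(A)=Irr(L_A)}; this makes $\widehat{e}$ an N-embedding for free (Proposition \ref{prop: free extension of A}), so the paper only has to check that the image of $\widehat{e}$ equals $\N(\S(A))$, and even there it states the key equivalence (``$\bigwedge\S(A)\setminus U$ exists iff $U=\widehat{e}(a)$ for some $a$'') without proof. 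Your argument supplies exactly the content hidden in that step: the surjectivity direction via $a:=\bigwedge(\S(A)\setminus U)$ and the iterated prime property (where one should note that the partial meets $y_{i}\wedge\dots\wedge y_{n}$ exist because all the $y_{j}$ lie above $a$), plus direct proofs of injectivity and preservation of $\vee$ and existing $\wedge$. What the paper's route buys is economy, since it reuses Section \ref{sec: free extension}; what yours buys is independence from the free-extension machinery and an explicit verification of the step the paper glosses over. One small point worth making explicit either way: the convention $1\notin\Irr(\A)$ (which you flag, and which the paper uses implicitly via $\Irr(\Down(X))=\{[x)^{c}:x\in X\}$) is needed so that $\varphi(1)=\S(A)$ and the case $U=\S(A)$ of surjectivity goes through.
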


\begin{proof}
From Proposition \ref{prop: free extension of A}, we have that the map $\widehat{e}\colon A\to \Down(\S(A))$ is an N-embedding, where $\widehat{e}(a)=\{x\in\Irr(\A): a\nleq x\}$, for every $a\in A$. Thus, $\A\cong\widehat{e}[A]$. Let us show that $\widehat{e}[A]=\N(\S(A))$. Then,
\begin{align*}
U\in\N(\S(A)) &\iff U\in\Down(\S(A)) \text{ and } \gamma_A(U)=1\\
&\iff
U\in\Down(\S(A)) \text{ and } \bigwedge \S(A)\setminus U \text{ exists in } A\\
&\iff
\text{there exists } a\in A \text{ such that } U=\{x\in\Irr(\A): a\nleq x\}\\
&\iff
\text{there exists } a\in A \text{ such that } U=\widehat{e}(a)\\
&\iff
U\in\widehat{e}[A].
\end{align*}
Hence, $\A\cong\widehat{e}[A]=\N(\S(A))$.
\end{proof}

\begin{remark}
Let $\A$ be a finite distributive nearlattice. If $\A$ is in fact a lattice, then it follows that $\N(\S(\A))=\Down(\S(\A))$. Thus $\A\cong\Down(\S(\A))$. Moreover, it is clear that the finite posets are in  one-to-one correspondence with the DN-structures $\langle X,\gamma\rangle$ such that $\gamma(U)=1$, for all $U\in\Down(X)$. Therefore, from the representation above established we can obtain the representation given by Birkhoff for finite distributive lattices.
\end{remark}

\section*{Acknowledgments}

This work was partially supported by ANPCyT (Argentina) under the Grant PICT-2019-00882 and by CONICET (Argentina) under the Grant PIP 112-201501-00412. The first author was also partially supported by ANPCyT  under the Grant PICT-2019-00674 and by Universidad Nacional de La Pampa under the Grant P.I. No 78M, Res. 523/19.

\end{document}